\documentclass[12pt,a4]{article}
\usepackage{lineno}

\usepackage{geometry}
\usepackage{amsmath}
\usepackage{amsthm}
\usepackage{amssymb}
\usepackage{graphicx}
\usepackage{authblk}

\usepackage{hyperref}
\usepackage{xcolor}
\usepackage{esvect}
\usepackage[shortlabels]{enumitem}
\usepackage{tabularx}
\usepackage{multirow}
\usepackage{booktabs}
\usepackage[caption=false]{subfig}
\usepackage{mlmath}

\usepackage{caption}
\captionsetup[figure]{font=small}

\providecommand{\customgenericname}{}
\newcommand{\newcustomtheorem}[2]{%
  \newenvironment{#1}[1]
  {%
   \renewcommand\customgenericname{#2}%
   \renewcommand\theinnercustomgeneric{##1}%
   \innercustomgeneric
  }
  {\endinnercustomgeneric}
}

\newcustomtheorem{customthm}{Theorem}
\newcustomtheorem{customlemma}{Lemma}
\providecommand{\keywords}[1]
{
  \small	
  \textbf{\textit{Keywords---}} #1
}


\counterwithin{figure}{subsection}
\numberwithin{figure}{subsection}

\begin{document}
\numberwithin{equation}{section}
\title{Numerical Stability for Differential Equations with Memory}
\author{
Guihong Wang\textsuperscript{\rm 1,2}, 
Yuqing Li\textsuperscript{\rm 1,2},  
Tao Luo\textsuperscript{\rm 1,2,3,4,5}\thanks{Corresponding author: luotao41@sjtu.edu.cn.},
Zheng Ma\textsuperscript{\rm 1,2,3,4}\thanks{Corresponding author: zhengma@sjtu.edu.cn.},
Nung Kwan Yip\textsuperscript{\rm 6}, 
Guang Lin\textsuperscript{\rm 6,7}.\\
\textsuperscript{\rm 1}  School of Mathematical Sciences, Shanghai Jiao Tong University \\
\textsuperscript{\rm 2}  CMA-Shanghai, Shanghai Jiao Tong University\\
\textsuperscript{\rm 3} Institute of Natural Sciences, MOE-LSC, Shanghai Jiao Tong University \\
\textsuperscript{\rm 4} Qing Yuan Research Institute, Shanghai Jiao Tong University\\
\textsuperscript{\rm 5} Shanghai Artificial Intelligence Laboratory\\
\textsuperscript{\rm 6} Department of Mathematics, Purdue University\\
\textsuperscript{\rm 7} School of Mechanical Engineering, Purdue University\\

\{theodore123, liyuqing\underline{~}551,luotao41,zhengma\}@sjtu.edu.cn, \{yipn, guanglin\}@purdue.edu.
}
\date{\today}
\maketitle
\begin{abstract}
 In this note, we systematically investigate linear multi-step methods for differential equations  with memory.  In particular, we focus on the numerical stability for multi-step methods. According to this investigation, we give some sufficient conditions for the stability and convergence of some common multi-step methods, and accordingly, a  notion of A-stability for differential equations with memory. Finally, we carry out the computational performance of our theory through numerical examples.
\end{abstract}
\keywords{Differential Equations with Memory, Linear Multistep Methods, Zero-Stability, A-Stability}
\section{Introduction}\label{Section...Introduction}

The past few decades have been witnessing a strong interest among physicists, engineers and mathematicians for the theory and numerical modeling of ordinary differential equations~(\textbf{ODEs}). For ODEs, some are solved analytically; see, for example the excellent book by V.I. Arnold~\cite{Arnold1998Ordinary}, and the references therein. However, only some special cases of these equations can be solved analytically, so we shall turn to numerical solutions as an  alternative.
Sophisticated methods have been developed recently for the numerical solution of ODEs, to name a few, the most commonly used Euler methods, the elegant Runge-Kutta and extrapolation methods, and multistep and general multi-value methods, all of which can be found in the well-known book by Hairer et al.~\cite{Hairer1987solving}.

    According to~\cite{Arino2007Delay}, Picard  (1908) emphasized the importance of the consideration of hereditary effects in the modeling of physical systems. Careful studies of the real world compel us  that, reluctantly, the rate of change of physical systems depends not only their present state, but also on their past history~\cite{Bellman1963Differential}. Development of the theory of delay differential equations~(\cite{Arino2007Delay,smith2011introduction}) gained much momentum  after that. DDEs belong to the class of functional differential equations, which are infnite dimensional, as opposed  to ODEs. The study of DDEs is also popular and stability of some linear method for DDEs have been discussed in \cite{TORELLI198915,wang2009nonlinear}. However,  the hypothesis that a physical system depends on the time lagged solution at a given point is not realistic, and one should rather extend its dependence over a longer period of time instead of a instant~\cite{Hairer1987solving}. Volterra (1909), (1928) discussed the integro-differential equations that model viscoelasticity and  he wrote a book on the role of hereditary effects on models for the interaction of species (1931). To illustrate the difference, DDEs are equations with ``time lags'', such as
\begin{equation}\label{eq...Introduction...DDE}
    \dot{x}(t) = f(t,x(t),x(t-\tau)),
\end{equation}
whilst an integro-differential equation reads
\begin{equation}\label{eq...Introduction...IntegralDifferential}
    \dot{x}(t)=f\left(t,x, \int_{t-\tau}^t K(t,\xi,x(\xi)) \diff \xi \right),
\end{equation}
where the memory term comes into effect and $\tau$ is the duration of the memory. 
Moreover, instead of integro-differential equations that are only concerned with the local ``time lags", we focus on the  ODEs with memory which contains all the global states. Generally, an ODE with memory in this note reads
\begin{equation}\label{eq...introduction...TargetEquation}
    \dot{\vx}(t)=\vf(\vx(t),t)+\int_0^t \vg(\vx(s),s,t) \diff s, \quad \vx(0)=\vx_0,
\end{equation}
where $\vx:\overline{\mathbb{R}}_+\rightarrow \mathbb{R}^d$, $\vf:\mathbb{R}^d\times\overline{\mathbb{R}}_+\rightarrow\mathbb{R}^d$, $\vg:\mathbb{R}^d\times\overline{\mathbb{R}}_+\times\overline{\mathbb{R}}_+\rightarrow\mathbb{R}^{d}$. 
In fact, the memory term in \eqref{eq...introduction...TargetEquation} is quiet common in various fields. A popular example is the time-fractional differential equation \cite{hilfer2000applications,podlubny1999introduction} which is used to describe the subdiffusion process.  Numerical schemes and the analysis for these differential equations with memory have been estabished in \cite{liao2018second,liao2019discrete,liao2018sharp,liu2022unconditionally}. 

In this note, we aim to develop new tools for the analysis of linear multi-step methods~(\textbf{LMMs}) designed  for the initial value problem~\eqref{eq...introduction...TargetEquation}.
Truncation errors and numerical stability of classical LMMs, i.e., LMMs designed for normal ODEs,  are well understood. Due to the existence of the memory term, however,  one may find the implementation of LMMs on~\eqref{eq...introduction...TargetEquation} challenging.  A nature resolution is the  introduction of  quadratures, a broad family of algorithms for calculating the numerical value of a definite integral, in purpose of the approximation of the memory integration.  Hence, LMMs  for ODEs with memory are numerical methods comprising characteristics from classical LMMs and quadratures. However, such LMMs are far from being well understood from a numerical analysis perspective. More importantly, the numerical stability might not be guaranteed due to the accumulation of error aroused from the memory integration. In this note, we present our study on LMMs designed  for ODEs with memory, in particular,  providing  a definition of stability, which is consistent to the classical ODE case. We believe our methods can be extended to nonlinear methods such as Runge-Kutta.

We have to clarify that  Theorem~\ref{thm...convergence} which describes the convergence for general linear multistep method in Section~\ref{subsection...convergence} has been proved in \cite[Chapter 11]{linz1985analytical} where  \eqref{eq...introduction...TargetEquation} is called Volterra-type integro-differential equation.  Our main results are essentially equivalent to those in~\cite{linz1985analytical}.

\section{Preliminaries}\label{Section...Preliminaries}
\subsection{Notations}\label{subsection...Notations}
We begin this section by introducing some notations that will be used in the rest of this note. 
We set $h$  as the constant step size, then the $k$-th grid point  $t_k:=kh$ for all $k\in\mathbb{N}$.
We let $[k]=\{1,2, \ldots, k\}$, and  we use $\fO(\cdot)$ and $o(\cdot)$ for the standard Big-O and Small-O notations. 
Finally, we denote vector $L^2$ norm as $\Norm{\cdot}_2$, vector or function $L_{\infty}$ norm as $\Norm{\cdot}_{\infty}$, matrix spectral~(operator) norm as $\Norm{\cdot}_{2\to 2}$, and  matrix infinity norm as $\Norm{\cdot}_{\infty\to\infty}$.
\subsection{Linear Multistep Methods}\label{Subsection...Description of the scheme}
\begin{defi}\label{Definition....NumericalScheme}
A \textbf{linear $q$-step method} for  the initial value problem~\eqref{eq...introduction...TargetEquation} reads
\begin{equation}\label{eq...preliminary...qstep}
    \vx_n=\sum_{i=1}^{q}\alpha_{q-i}\vx_{n-i}+h\sum_{i=0}^{q}\beta_{q-i}\vf_{n-i}+h\sum_{i=0}^{n}w_{n,i}\vg_{n,i}.
\end{equation}
  In the above formula, the coefficients $\left\{\alpha_k\right\}_{k=0}^{q-1}$ and $\left\{\beta_k\right\}_{k=0}^{q}$ satisfy $\abs{\alpha_0}+\abs{\beta_0}>0$, and  $\vf_{n}:=\vf(\vx_n,t_n)$,  $\vg_{n,i}:=\vg(\vx_{i},t_i,t_n)$. In terms of LMMs, if $\beta_q=0$, we say the method is \textbf{explicit}. Otherwise, it is \textbf{implicit}. In terms of quadratures,  if $w_{n,0}=w_{n,n}=0$ for all $n\in\mathbb{N}$, we say that the numerical integration is \textbf{open}. Otherwise,  it is \textbf{close}.
\end{defi}
\begin{rmk}\label{rmk...1}
 The series of weights $\left\{w_{n,i}\right\}_{i=0}^{n}$  varies for each $n\in\mathbb{N}$.
\end{rmk}
\noindent   In Section \ref{Subsection...LinearMulti-stepMethod} and Section \ref{Subsection..NumericalQuadrature},  we  summarize the two main ingredients of our methods designed for ODEs
with memory. One is the classical LMMs, and we remark that throughout the whole note, \emph{the term LMMs refers to  LMMs   in Definition \ref{Definition....NumericalScheme}, otherwise we would say classical LMMs to distinguish out}.  The other is the numerical quadratures,  methods involved to  approximate the integration term
$\int_0^t\vg(\vx(s),s,t)\diff s$.
\subsection{Classical Linear Multistep Methods}\label{Subsection...LinearMulti-stepMethod}
For the initial value problem 
\begin{equation}\label{eq...dx/dt=f}
    \dot{\vx}(t)=\vl(\vx(t),t),\quad {\vx}(0)=\vx_0, 
\end{equation}  
where $\vx:\overline{\mathbb{R}}_+\rightarrow \mathbb{R}^d$ and $\vl:\mathbb{R}^d\times\overline{\mathbb{R}}_+\rightarrow\mathbb{R}^d$, we consider the general finite difference scheme
\begin{equation}\label{eq...preliminary...classical}
    \vx_{n+q}=\sum_{i=1}^{q}\alpha_{q-i}\vx_{n+q-i}+h\sum_{i=0}^{q}\beta_{q-i}\vl_{n+q-i},
\end{equation}
where  $\vl_n:=\vl(\vx_n,t_n)$, and \eqref{eq...preliminary...classical}
includes all considered methods as special cases.  
Specifically, some commonly used classical LMMs designed for \eqref{eq...dx/dt=f}
 are listed out as follows
\begin{equation}
\begin{aligned}
    \text{BDF1/AM1/Backward Euler method:}
    &\quad \vx_{n}=\vx_{n-1}+h\vl_{n},\\
    \text{BDF2:}
    &\quad \vx_{n}=\frac{4}{3}\vx_{n-1}-\frac{1}{3}\vx_{n-2}+\frac{2}{3}h\vl_{n},\\
    \text{AM2/Trapezoidal rule method:}
    &\quad \vx_n=\vx_{n-1}+h\left(\frac{1}{2}\vl_n+\frac{1}{2}\vl_{n-1}\right),\\
    \text{AB1/Euler method:}
    &\quad \vx_n=\vx_{n-1}+h\vl_{n-1},\\
    \text{AB2:}
    &\quad \vx_n=\vx_{n-1}+h\left(\frac{3}{2}\vl_{n-1}-\frac{1}{2}\vl_{n-2}\right),\\
    \text{MS1/Mid-point method:}
    &\quad \vx_n=\vx_{n-2}+2h\vl_{n-1},\\
    \text{MS2/Milne method:}
    &\quad \vx_n=\vx_{n-2}+h\left(\frac{1}{3}\vl_n+\frac{4}{3}\vl_{n-1}+\frac{1}{3}\vl_{n-2}\right),
\end{aligned}
\end{equation}
and many other classical LMMs can be found in the literature of~\cite{Griffiths2010NumericalODE,Hairer1987solving,Wanner1996solving}. 
\subsection{Numerical Quadratures}\label{Subsection..NumericalQuadrature}
In order to   approximate   the integration term
\begin{equation}\label{eq...preliminary...integral}
  \int_0^t\vg(\vx(s),s,t)\diff s,\end{equation}
only  the composite rules of Newton--Cotes formulas are considered in this note, the formulas are termed \textbf{closed} when the end points of the  integration interval   are included in the formula. Otherwise, if excluded, we have an \textbf{open} Newton-Cotes quadrature. Some commonly used Newton--Cotes formulas designed for \eqref{eq...preliminary...integral} are listed out below: 
\begin{equation}
    \begin{aligned}
        \text{Trapezoidal rule (closed):}
         & \quad h\sum_{i=0}^{n-1}\frac{1}{2}(\vg_{n,i}+\vg_{n,i+1}),                      \\
        \text{Simpson's rule (closed):}
         & \quad h\sum_{i=0}^{ n/2-1}\frac{1}{3}(\vg_{n,2i}+4\vg_{n,2i+1}+\vg_{n,2i+2}),   \\
        \text{Mid-point rule (open):}
         & \quad h\sum_{i=0}^{n/2-1}2\vg_{n,2i+1},                                         \\
        \text{Trapezoidal rule (open):}
         & \quad h\sum_{i=0}^{n/3-1}\frac{3}{2}(\vg_{n,3i+1}+\vg_{n,3i+2}),                \\
        \text{Milne's rule (open):}
         & \quad h\sum_{i=0}^{n/4-1}\frac{4}{3}(2\vg_{n,4i+1}-\vg_{n,4i+2}+2\vg_{n,4i+3}),
    \end{aligned}
\end{equation}
and many other quadratures can be found in the literature of~\cite{Burden1985numerical,Davis2007methodsofnumericalintegration,Stoer2013introductiontoNumericalAnalysis}.

We  remark that the implementation of these rules on the initial problem~\eqref{eq...introduction...TargetEquation} could be a little tricky.  For instance, the closed Simpson's rule presents a   challenge in that its formula requires  $n$ to be an even number.  However, such condition may not always be met, as the time step $n$ tends to increase and varies with time due to the presence of memory effects. In the circumstances where $n$ is odd, however, we may circumvent this complication  by using Simpson's rule to approximate the term $\int_{t_1}^{t_n}\vg(\vx(s),s,t_n)\diff s$ instead of $\int_{t_0}^{t_n}\vg(\vx(s),s,t_n)\diff s$. As for the residue term $\int_{t_0}^{t_1}\vg(\vx(s),s,t_n)\diff s $, it can be   estimated by applying Simpson's rule to the time region $[t_0,t_1]$, i.e.,  $\int_{t_0}^{t_1}\vg(\vx(s),s,t_n)\diff s \approx\frac{h}{6}(\vg_{n,0}+4\vg_{n,\frac{1}{2}}+\vg_{n,1})$. Therefore, we suggest that the grid points need be computed with comparatively small time step at initial stage. 

\subsection{Truncation Errors, Consistency, Convergence and Zero-Stability}
Before we proceed to the definition of local and global truncation errors for classical LMM, we associate with \eqref{eq...preliminary...qstep}   the numerical solution at $t_n$ denoted by  $\vL_n\left({\{\vx(t_j)\}}_{j=0}^{k}\right)$, given exact solution $\{\vx(t_j)\}_{j=0}^k$ to the initial value problem \eqref{eq...introduction...TargetEquation}, i.e., $\left\{\vL_n\left({\{\vx(t_j)\}}_{j=0}^{k}\right)\right\}_{n\geq k+1}$ satisfies
\begin{equation}\label{eq...text...subsec...zerostable}
\begin{aligned}
\vL_n\left({\{\vx(t_j)\}}_{j=0}^{k}\right)&=\sum_{i=1}^{q}\alpha_{q-i} \vL_{n-i}\left({\{\vx(t_j)\}}_{j=0}^{k}\right)\\
&~~+h\sum_{i=0}^{q}\beta_{q-i}\vf\left(\vL_{n-i}\left({\{\vx(t_j)\}}_{j=0}^{k}\right),t_n\right)\\ &~~+h\sum_{i=0}^{n}w_{n,i}\vg\left( \vL_{i}\left({\{\vx(t_j)\}}_{j=0}^{k}\right),t_i,t_n\right),
\end{aligned}
\end{equation}
and for any $i\in[0:k]$, we set $\vL_i\left({\{\vx(t_j)\}}_{j=0}^{k}\right):=\vx(t_i)$ for convenience.
\begin{defi}\label{Definition....TruncationError}
    The \textbf{local truncation error} reads
    \begin{align}
        \vtau_n
        : & = \vx(t_{n})-\vL_n\left({\{\vx(t_i)\}}_{i=0}^{n-1}\right), \label{eqGroup....DefinitionofLocalTruncationError}
    \end{align}
and     we say   the numerical scheme is \textbf{consistent}  if $\Norm{\vtau_n}_{\infty}=O(h)$ for $n = 1,2,\cdots$. Moreover, a numerical scheme is said to be consistent of  \textbf{order $p$} if for  sufficiently regular differential equations \eqref{eq...introduction...TargetEquation}, $\Norm{\vtau_n}_{\infty}=O(h^{p+1})$.
\end{defi}
\begin{defi}\label{Definition....GlobalTruncationError}
    The \textbf{global truncation error} reads
    \begin{equation}\label{eq...DefinitionofGlobalTruncationError}
        \ve_n:=\vx(t_n)-\vL_n\left({\{\vx(t_0)\}}\right)=\vx(t_n)-\vL_n\left({\{\vx_0\}}\right),
    \end{equation}
    and we say the numerical method  \textbf{converges} if $\Norm{\ve_n}_{\infty}=o(1)$ holds uniformly for all $n\in\mathbb{N}$.
\end{defi}
\begin{defi}[Zero-Stability]\label{Definition...Rootcondition}
A classical LMM \eqref{eq...preliminary...classical} is called \textbf{zero-stable}, if the generating polynomial 
\begin{equation}\label{eq...def...Stable+Root}
p(s) :=  s^q -\sum_{i=1}^{q}\alpha_{q-i}s^{q-i},
\end{equation}
satisfies the \textbf{root condition}, i.e.,
    \begin{enumerate}[i)]
        \item The roots of $p(s)$ lie on or within the unit circle;
        \item The roots on the unit circle are simple.
    \end{enumerate}
\end{defi}
\noindent Another equivalent definition of zero-stability is given out as follows
\begin{defi}\label{Definition..NumeicalStability}
A classical $q$-step LMM \eqref{eq...preliminary...classical} is   \textbf{zero-stable}, if for any time  $T>0$, there exists $h_0>0$, such that for  any  given $0<h\leq h_0$, the following holds
    \begin{equation}\label{eq...DefinitionofNumericalStability}
    \Norm{{\vx}_n-\tilde{\vx}_n}_{\infty}\leq C(T)\sup_{0\leq i\leq q-1}\Norm{\vx_i-\tilde{\vx}_i}_{\infty}, 
    \end{equation}
 for  all $n\in\left[\lfloor \frac{T}{h} \rfloor\right],$
    where $\vx_n$ and $\tilde{\vx}_n$ are numerical solutions implemented with respect to different initial values $\vx_i$ and $\tilde{\vx}_i$, $i = 0,1,\ldots,q-1$, i.e., 
\[
\vx_n:=\vL_n\left({\{\vx_0,\ldots,\vx_{q-1}\}}\right), \quad    \tilde{\vx}_n:=\vL_n\left({\{\tilde{\vx}_0,\ldots,\tilde{\vx}_{q-1}\}}\right),
\]
with $\vg\equiv\vzero$ in \eqref{eq...text...subsec...zerostable}.
\end{defi}

\noindent The consistency and zero-stability are in fact the sufficient and necessary conditions for the convergence of classical LMMs, which can be represented by the Dahlquist equivalence theorem~\cite[Theorem 4.5]{Hairer1987solving}.
\begin{thm}[Convergence for classical LMMs]
\label{thm...DahCelebratedThm}
    A  classical LMM is convergent if and only if it is  consistent and zero-stable.
\end{thm}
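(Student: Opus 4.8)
The plan is to prove the two implications separately. The forward direction (convergence $\Rightarrow$ consistency and zero-stability) I would extract by testing the scheme against trivial problems whose exact solutions are explicit, while the reverse direction (consistency and zero-stability $\Rightarrow$ convergence) is the analytic core and rests on a discrete variation-of-constants argument together with a uniform bound on the powers of the companion matrix. For necessity of zero-stability, I would apply \eqref{eq...preliminary...classical} to $\dot{\vx}=\vzero$ with $\vx_0=\vzero$, whose exact solution vanishes identically; since $\vl\equiv\vzero$, the iterates obey the homogeneous recurrence $\vx_{n+q}=\sum_{i=1}^q\alpha_{q-i}\vx_{n+q-i}$ generated by $p(s)$. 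Were the root condition of Definition~\ref{Definition...Rootcondition} to fail, $p$ would possess either a root $s_*$ with $\abs{s_*}>1$ or a multiple root on the unit circle. In the first case I would pick starting values of size $\sqrt{h}$ exciting the mode $s_*^{\,n}$, so that $\Norm{\vx_n}_{\infty}\sim\sqrt{h}\,\abs{s_*}^{\,n}$ blows up at $t_n\approx T$ (where $n\approx T/h$); in the second case the resolved mode grows like $n\,\abs{s_*}^{\,n}\sim n$, giving $\Norm{\vx_n}_{\infty}\sim\sqrt{h}\cdot(T/h)\to\infty$. Either way $\Norm{\ve_n}_{\infty}\not\to0$ although the starting errors tend to zero, contradicting Definition~\ref{Definition....GlobalTruncationError}.

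For necessity of consistency, I would test against polynomial exact solutions. A convergent method must reproduce the constant solution of $\dot{\vx}=\vzero$, $\vx\equiv\vx_0$, and the homogeneous recurrence does so only when $1$ is a root of $p$; this forces $p(1)=0$, equivalently $\sum_{i=1}^q\alpha_{q-i}=1$. Feeding next the linear solution of $\dot{\vx}\equiv\mathbf{c}$ forces the first-order condition $p'(1)=\sigma(1)$, where $\sigma(s):=\sum_{i=0}^q\beta_{q-i}s^{q-i}$, since otherwise the $O(h)$ per-step defect accumulates to an $O(1)$ error over $[0,T]$ and prevents $\Norm{\ve_n}_{\infty}\to0$. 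Together these two identities are precisely consistency, and they guarantee $\Norm{\vtau_n}_{\infty}/h\to0$.

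For sufficiency I would subtract the scheme from the identity satisfied by the exact solution, obtaining the error recurrence
\begin{equation*}
\ve_{n+q}=\sum_{i=1}^{q}\alpha_{q-i}\ve_{n+q-i}+h\sum_{i=0}^{q}\beta_{q-i}\bigl(\vl(\vx(t_{n+q-i}),t_{n+q-i})-\vl(\vx_{n+q-i},t_{n+q-i})\bigr)+\vtau_{n+q},
\end{equation*}
and bound each bracketed difference by $L\Norm{\ve_{n+q-i}}_{\infty}$ using the Lipschitz continuity of $\vl$. Stacking $\mathbf{E}_n:=(\ve_{n+q-1}^{\top},\ldots,\ve_{n}^{\top})^{\top}$ recasts this as $\mathbf{E}_{n+1}=A\,\mathbf{E}_n+\mathbf{r}_n$, where $A$ is the companion matrix of $p$ and $\Norm{\mathbf{r}_n}_{\infty}\le C\,hL\,\max_{0\le j\le q}\Norm{\ve_{n+j}}_{\infty}+\Norm{\vtau_{n+q}}_{\infty}$; here the implicit contribution $\beta_q\ne0$ is absorbed into the left-hand side for $h\le h_0$ small enough that $h\abs{\beta_q}L<\tfrac{1}{2}$. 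Iterating (discrete variation of constants) gives $\mathbf{E}_n=A^{n}\mathbf{E}_0+\sum_{k=0}^{n-1}A^{\,n-1-k}\mathbf{r}_k$, and a discrete Grönwall inequality then yields
\begin{equation*}
\Norm{\ve_n}_{\infty}\le C(T)\Bigl(\max_{0\le j\le q-1}\Norm{\ve_j}_{\infty}+\tfrac{1}{h}\max_{j}\Norm{\vtau_j}_{\infty}\Bigr),
\end{equation*}
which is $o(1)$ because the starting errors vanish for accurately initialized data and consistency makes $\Norm{\vtau_j}_{\infty}/h\to0$.

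The main obstacle is the uniform bound $\sup_{k\ge0}\Norm{A^{k}}_{\infty\to\infty}\le M$, which is precisely where zero-stability enters and which makes the Grönwall step close. I would establish it by passing to the Jordan normal form of $A$: eigenvalues strictly inside the unit disk generate geometrically decaying blocks, while each eigenvalue on the unit circle is, by the simplicity clause of the root condition, confined to a $1\times1$ block whose powers have modulus one and hence do not grow. The constant $M$ depends only on the coefficients $\{\alpha_k\}$ and is therefore independent of $h$; all $h$-dependence is confined to the forcing $\mathbf{r}_k$, so the final estimate is uniform over $n\le\lfloor T/h\rfloor$ as required by Definition~\ref{Definition....GlobalTruncationError}.
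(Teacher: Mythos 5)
The paper does not actually prove this statement: it is quoted as the classical Dahlquist equivalence theorem and delegated to \cite[Theorem 4.5]{Hairer1987solving}, so there is no internal proof to compare yours against. Your argument is, in outline, the standard proof from that literature: necessity of zero-stability by applying the scheme to $\dot{\vx}=\vzero$ and exciting a root of $p$ outside the unit circle (or a multiple unimodular root) with starting perturbations of size $\sqrt{h}$; necessity of consistency by forcing the scheme to reproduce constant and linear solutions, yielding $p(1)=0$ and $p'(1)=\sigma(1)$; sufficiency via the companion-matrix one-step reformulation, the power bound $\sup_{k\geq 0}\Norm{A^k}_{\infty\to\infty}\leq M$ (Jordan form plus simplicity of the unimodular roots), absorption of the implicit term for small $h$, and a discrete Gr\"onwall estimate. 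This is also the same strategy the paper itself deploys for the memory case in Theorem~\ref{thm...zerostable} and Theorem~\ref{thm...convergence}, so your proof is coherent with the rest of the note. Two frictions with the paper's own definitions are worth recording, though neither is a fault of your reasoning. First, your necessity argument presupposes that convergence quantifies over starting values tending to the exact initial data, as in \cite{Hairer1987solving}; Definition~\ref{Definition....GlobalTruncationError} literally fixes the starting value to be the exact $\vx_0$, and under that reading you are not entitled to perturb the initial data, so the necessity direction would instead have to be argued through truncation errors exciting the unstable root. Second, your final estimate carries the term $h^{-1}\max_{j}\Norm{\vtau_j}_{\infty}$, so you need $\Norm{\vtau_j}_{\infty}=o(h)$; Definition~\ref{Definition....TruncationError} only demands $O(h)$ for consistency, which would leave an $O(1)$ residual error. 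Your implicit use of the standard notion (order at least one, hence $O(h^2)$ truncation error for smooth problems, which indeed follows from $p(1)=0$ and $p'(1)=\sigma(1)$) is what makes the sufficiency direction close.
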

\subsection{A-Stability} 
Another crucial concept of stability in numerical analysis is the \textbf{A-stability}~\cite[Definition 6.3]{Griffiths2010NumericalODE}. For the test problem
    \begin{equation} \label{eq...text...TestEqforLinearODE}
        \dot{x}  (t) = \lambda x(t), \quad x(0)=x_0,
    \end{equation} 
where  $x:\overline{\mathbb{R}}_+\rightarrow \mathbb{R}$, and $\lambda\in \mathbb{C}$ with negative real part, i.e., $\Re(\lambda) < 0$,  its   solution reads  \[x(t)=x_0\exp({\lambda t}).\]  Hence, $x(t) \rightarrow 0$ as $t \rightarrow \infty$, regardless of $x_0$. The A-stable methods is a class of classical LMMs, when applied to \eqref{eq...text...TestEqforLinearODE},  for any  given step size $h>0$, the numerical  solutions $\{x_n\}_{n\geq 0}$ also tend to  zero as $n \to \infty$,    regardless of the choice of starting values $x_0$, and we formalize our aspirations by the following definition~\cite[Definition 6.3]{Griffiths2010NumericalODE}. 
\begin{defi}[Absolute Stability]\label{def...AbsoluteStability}
A classical LMM is said to be \textbf{absolutely stable} if, when applied to the test problem
 \eqref{eq...text...TestEqforLinearODE}
with some given value of $\hat{h} := h \lambda$, its solutions tend to  zero as $n \to \infty$ for any choice of starting values.
\end{defi}
\noindent  We introduce the single parameter $\hat{h}$ since the parameters $h$ and $\lambda$ occur altogether as a  product, and  a classical LMM is not always   absolutely stable for every choice of $\hat{h}$, hence we
are led to the definition of the \textbf{region of absolute stability}~\cite[Definition 6.5]{Griffiths2010NumericalODE}.
\begin{defi}[Region of Absolute Stability]\label{def...RegionofAbsoluteStability}
    The set of values $\mathcal{R}$ in the complex $\hat{h}$-plane for which a classical LMM is absolutely stable forms its \textbf{region of absolute stability}.
\end{defi}
\noindent With these two definitions, we are able to give out an equivalent definition of A-stability for the classical LMMs.
\begin{defi}[A-stability]\label{def...AStability...equivalent}
   A classical LMM designed for  the initial value problem \eqref{eq...text...TestEqforLinearODE}  is said to be \textbf{A-stable} if its region of absolute stability $\mathcal{R}$ includes the entire left half plane, i.e.,
    \begin{equation}\label{eq...def....A-Stable}
        \left\{\hat{h}\in\mathbb{C}:\Re\hat{h}<0\right\}\subseteq \mathcal{R}.
    \end{equation}
\end{defi}

\subsection{Formulation as  One-step Methods}
We are now at the point where it is useful to rewrite a LMM as a one-step method in a higher dimensional space~\cite{Hairer1987solving}. In order for this,  we define $\vpsi$ associated with the LMM
\begin{equation}\label{eq...text...Zero-Stable...q-step}
  \vx_{n+q}= \sum_{i=1}^{q}\alpha_{q-i}\vx_{n+q-i}+h\sum_{i=0}^{q}\beta_{q-i}\vf_{n+q-i}+h\sum_{i=0}^{n+q}w_{n+q,i}\vg_{n+q,i},   
\end{equation}
as  follows
\begin{align*}
\vpsi&:=\beta_q\vf\left(\sum_{i=1}^{q}\alpha_{q-i}\vx_{n+q-i}+h\vpsi, t_{n+q}\right)+\sum_{i=1}^{q}\beta_{q-i}\vf(\vx_{n+q-i}, t_{n+q-i})\\
&~~+w_{n+q,n+q} \vg\left(\sum_{i=1}^{q}\alpha_{q-i}\vx_{n+q-i}+h\vpsi, t_{n+q}, t_{n+q}\right)+\sum_{i=0}^{n+q-1}w_{n+q,i} \vg(\vx_{i}, t_i, t_{n+q}),
\end{align*}
then the LMM \eqref{eq...text...Zero-Stable...q-step} can be written as 
\begin{equation}\label{eq...text...ZeroStability...MultistepasCell}
\vx_{n+q}=\sum_{i=1}^{q}\alpha_{q-i}\vx_{n+q-i}+h\vpsi_{n+q}.
\end{equation}
For any $i\in[n]$,  we introduce a $dq$-dimensional vector 
\[
\vX_i:=\left(\vx_{i+q-1}^\T, \vx_{i+q-2}^\T, \cdots, \vx_{i}^\T\right)^\T,
\]
equipped with
\[
\mA:=\begin{pmatrix}
{\alpha}_{q-1}&{\alpha}_{q-2}&\cdots & \cdot &{\alpha}_{0}\\
1 & 0 & \cdots & \cdot & 0 \\
& 1 & & . & 0 \\
& & \ddots & \vdots & \vdots \\
& & & 1 & 0\end{pmatrix}\in\sR^{q\times q},
\]
and 
\[
\ve_1:=\left(\begin{array}{c}
1 \\
0 \\
0 \\
\vdots \\
0
\end{array}\right)\in\sR^{q},
\]
then  the LMM \eqref{eq...text...Zero-Stable...q-step} can be written   in  a closed form as follows: For any $i\in[n]$,
\begin{equation}\label{eq...formulationonestep}
    \vX_{i+1}=(\mA \otimes \mI_d) \vX_i+h \mPhi_i,  
\end{equation}
where
\[
\mPhi_i:=\left(\ve_1 \otimes \mI_d\right) \vpsi_{i+q},
\]
and $\otimes$ denotes the Kronecker tensor product. 
\section{Zero-Stability of Linear Multistep Methods}

\subsection{Zero-Stability and Root Condition}
Our results concerning zero-stability are essentially based on the following lemma.
\begin{lem}[Induction lemma for zero-stability]
\label{lem..ZeroStability}
For $\lambda\in\mathbb{C}$ and $\mu>0$,  let $\{y_n\}_{n\geq 0}\subset\sR$ be a  non-negative sequence 
 satisfying
\begin{equation}\label{eq...lem...Stability...Inudction}
        \Abs{1-\lambda h}y_n=y_{n-1}+h^2\mu\sum_{i=0}^{n-1} y_{i},
\end{equation}
then given any $0<h<\frac{1}{2\Abs{\lambda}}$, the following holds for all $k\in \mathbb{N}$,
\begin{equation}\label{eq..lem...StabilityUniformBoundConditional}
        y_k\leq \exp(2\Abs{\lambda}kh+\mu k^2 h^2)y_0.
\end{equation}
\end{lem}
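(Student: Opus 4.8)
The plan is to prove the bound \eqref{eq..lem...StabilityUniformBoundConditional} by induction on $k$, using the recurrence \eqref{eq...lem...Stability...Inudction} to control $y_k$ in terms of all previous terms. First I would rewrite the recurrence to isolate $y_n$:
\[
y_n = \frac{1}{\Abs{1-\lambda h}}\left(y_{n-1} + h^2\mu\sum_{i=0}^{n-1}y_i\right).
\]
The hypothesis $0<h<\frac{1}{2\Abs{\lambda}}$ gives $\Abs{\lambda}h<\tfrac12$, so $\Abs{1-\lambda h}\geq 1-\Abs{\lambda}h>\tfrac12>0$, and moreover $\frac{1}{\Abs{1-\lambda h}}\leq\frac{1}{1-\Abs{\lambda}h}\leq 1+2\Abs{\lambda}h\leq\exp(2\Abs{\lambda}h)$, where the middle inequality uses $\frac{1}{1-x}\leq 1+2x$ valid for $0\leq x\leq\tfrac12$. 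This elementary estimate on the amplification factor is what converts the implicit coefficient into the exponential factor $\exp(2\Abs{\lambda}kh)$ appearing in the claim.

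Next I would set up the induction. The base case $k=0$ is immediate since $\exp(0)=1$. Assuming \eqref{eq..lem...StabilityUniformBoundConditional} holds for all indices up to $n-1$, I would substitute these bounds into the rewritten recurrence. Using $y_i\leq\exp(2\Abs{\lambda}ih+\mu i^2h^2)y_0\leq\exp(2\Abs{\lambda}(n-1)h+\mu(n-1)^2h^2)y_0$ for each $i\leq n-1$ (monotonicity of the exponent in $i$), the sum $h^2\mu\sum_{i=0}^{n-1}y_i$ is bounded by $n h^2\mu\exp(2\Abs{\lambda}(n-1)h+\mu(n-1)^2h^2)y_0$, and $y_{n-1}$ by $\exp(2\Abs{\lambda}(n-1)h+\mu(n-1)^2h^2)y_0$. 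Factoring out the common exponential, the bracket becomes $(1+n\mu h^2)\exp(2\Abs{\lambda}(n-1)h+\mu(n-1)^2h^2)y_0$.

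The key algebraic step is then to absorb the two remaining factors into the target exponent for index $n$. Multiplying by the amplification bound $\exp(2\Abs{\lambda}h)$ upgrades the linear part of the exponent from $(n-1)h$ to $nh$ exactly. For the quadratic part, I need $(1+n\mu h^2)\exp(\mu(n-1)^2h^2)\leq\exp(\mu n^2h^2)$, equivalently $1+n\mu h^2\leq\exp\bigl(\mu h^2(n^2-(n-1)^2)\bigr)=\exp\bigl(\mu h^2(2n-1)\bigr)$. Since $2n-1\geq n$ for $n\geq 1$ and $1+x\leq\exp(x)$, we have $1+n\mu h^2\leq\exp(n\mu h^2)\leq\exp((2n-1)\mu h^2)$, which closes the induction.

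I expect the main obstacle to be the careful bookkeeping of the two separate exponential factors and verifying that the chosen comparison $\frac{1}{1-x}\leq 1+2x\leq e^{2x}$ is valid precisely on the range $x=\Abs{\lambda}h\in[0,\tfrac12)$ guaranteed by the hypothesis $h<\frac{1}{2\Abs{\lambda}}$; everything else is routine once the amplification factor and the quadratic-exponent telescoping $n^2-(n-1)^2=2n-1\geq n$ are in hand.
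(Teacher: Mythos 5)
Your proposal is correct and follows essentially the same route as the paper's proof: induction on $n$, bounding the memory sum by $n$ times the largest previous term, converting the amplification factor via $\frac{1}{\Abs{1-\lambda h}}\leq\frac{1}{1-\Abs{\lambda}h}\leq\exp(2\Abs{\lambda}h)$, and absorbing $1+n\mu h^2\leq\exp(\mu(2n-1)h^2)$ using $(n-1)^2+(2n-1)=n^2$. The only cosmetic difference is your intermediate step $\frac{1}{1-x}\leq 1+2x$ for $x\in[0,\tfrac12]$, which the paper skips by asserting the exponential bound directly.
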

\begin{proof}
We prove this by induction. Suppose that \eqref{eq..lem...StabilityUniformBoundConditional} holds for $k\in[0:n-1]$, then
\begin{align*}
        y_n
        &\leq \frac{1}{\Abs{1-\lambda h}}\exp(2\Abs{\lambda}(n-1)h+\mu(n-1)^2h^2)y_0+\frac{\mu h^2}{\Abs{1-\lambda h}}\sum_{k=0}^{n-1}\exp(2\Abs{\lambda}kh+\mu k^2h^2)y_0\\
        &\leq \frac{1+\mu nh^2}{\Abs{1-\lambda h}}\exp(2\Abs{\lambda}(n-1)h+\mu(n-1)^2h^2)y_0.
\end{align*}
It suffices to show that
\begin{equation}\label{eq1...lem...stability...proof}
        \frac{1+\mu nh^2}{\Abs{1-\lambda h}}\leq\exp(2|\lambda|h+\mu(2n-1)h^2). 
\end{equation}
As we notice that  $\Abs{\lambda}h\leq \frac{1}{2}$, then 
\begin{align*}
        \frac{1}{\Abs{1-\lambda h}}
        &\leq \frac{1}{1-\Abs{\lambda}h}\leq \exp(2\Abs{\lambda}h),\\
        1+\mu nh^2
        &\leq \exp(\mu n h^2)\leq \exp(\mu(2n-1)h^2), 
\end{align*}
which finishes the proof.
\end{proof}
\noindent We  impose some   technical conditions  on $\vf$ and $\vg$ in the initial value problem \eqref{eq...introduction...TargetEquation}.
\begin{assump}[Uniform Lipschitz condition]\label{assump..Lipschitz}
There exists $L>0$, such that 
\begin{equation}
    \begin{aligned}
    \Norm{{\vf(\vx,t)-\vf(\tilde{\vx},t)}}_{\infty}&\leq L\Norm{\vx-\tilde{\vx}}_{\infty}, \\
    \Norm{\vg(\vx,s,t)-\vg(\tilde{\vx},s,t)}_{\infty}&\leq L\Norm{\vx-\tilde{\vx}}_{\infty},
    \end{aligned}
    \end{equation}
hold uniformly for all $\vx, \tilde{\vx}\in\mathbb{R}^d$ and $t\geq s>0$.
\end{assump}
\begin{thm}[Zero-stablity for LMM with memory]\label{thm...zerostable}
If a linear $q$-step method
\begin{equation}\label{eq..thm...qstep}
  \vx_{n+q}= \sum_{i=1}^{q}\alpha_{q-i}\vx_{n+q-i}+h\sum_{i=0}^{q}\beta_{q-i}\vf_{n+q-i}+h\sum_{i=0}^{n+q}w_{n+q,i}\vg_{n+q,i}, 
\end{equation}
whose generating polynomial 
\begin{equation}\label{eq...thm...Stable+Root}
p(s)  =  s^q -\sum_{i=1}^{q}\alpha_{q-i}s^{q-i},
\end{equation}
satisfies the {root condition},
then it is zero-stable. 
\end{thm}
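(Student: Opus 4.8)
The plan is to build on the one-step reformulation~\eqref{eq...formulationonestep} and to feed the resulting error recursion into the induction lemma, Lemma~\ref{lem..ZeroStability}. Let $\vX_n$ and $\tilde{\vX}_n$ be the block iterates produced by~\eqref{eq..thm...qstep} from two families of starting values $\{\vx_i\}_{i=0}^{q-1}$ and $\{\tilde{\vx}_i\}_{i=0}^{q-1}$. Subtracting the two instances of~\eqref{eq...formulationonestep} gives the error recursion
\[
\vX_{n+1}-\tilde{\vX}_{n+1}=(\mA\otimes\mI_d)(\vX_n-\tilde{\vX}_n)+h(\mPhi_n-\tilde{\mPhi}_n).
\]
Two ingredients then drive the argument: the root condition controls the homogeneous part $(\mA\otimes\mI_d)$, while the Lipschitz Assumption~\ref{assump..Lipschitz} controls the inhomogeneous part $\mPhi_n-\tilde{\mPhi}_n$, whose memory piece is precisely what the $h^2$-weighted sum in Lemma~\ref{lem..ZeroStability} is built to absorb.

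First I would record that the root condition makes the companion matrix $\mA$ power-bounded. Its eigenvalues are exactly the roots of the generating polynomial $p(s)$ in~\eqref{eq...thm...Stable+Root}, which the root condition confines to the closed unit disk with the boundary roots simple; for a companion matrix this is precisely the condition that $\sup_{k\ge 0}\Norm{\mA^{k}}_{2\to 2}=:M<\infty$. Passing to the adapted norm $\Norm{u}_{*}:=\sup_{k\ge 0}\Norm{(\mA\otimes\mI_d)^{k}u}_{\infty}$ for $u\in\mathbb{R}^{dq}$, which is equivalent to $\Norm{\cdot}_{\infty}$ with constant $M$, one has $\Norm{(\mA\otimes\mI_d)u}_{*}\le\Norm{u}_{*}$, so that the homogeneous part of the recursion is non-expansive. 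This is the only place the root condition enters, and it is the classical engine behind zero-stability.

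The core step is to estimate $\Norm{\mPhi_n-\tilde{\mPhi}_n}$ through Assumption~\ref{assump..Lipschitz}. Since $\mPhi_n=(\ve_1\otimes\mI_d)\vpsi_{n+q}$, it suffices to bound $\vpsi_{n+q}-\tilde{\vpsi}_{n+q}$. The $\vf$-terms — including the implicit one, whose argument involves $\vpsi$ itself — are local: for $h$ small the implicit relation is uniquely solvable and its Lipschitz factor moves to the left, producing a coefficient $\Abs{1-\lambda h}$ on $\Norm{\vX_{n+1}-\tilde{\vX}_{n+1}}_{*}$, with $\lambda$ built from $L$, $\beta_q$ and the implicit quadrature weight. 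The memory term $\sum_i w_{n+q,i}(\vg_{n+q,i}-\tilde{\vg}_{n+q,i})$ is bounded, via the Lipschitz estimate for $\vg$ together with the fact that each composite Newton--Cotes weight carries a factor of the step size, by a constant multiple of $h\sum_i\Norm{\vx_i-\tilde{\vx}_i}_{\infty}$; combined with the factor $h$ already present in~\eqref{eq...formulationonestep} this contributes at order $h^2\sum_i\Norm{\vx_i-\tilde{\vx}_i}_{\infty}$. Dominating $\Norm{\vX_n-\tilde{\vX}_n}_{*}$ by a scalar majorant $y_n$ then yields
\[
\Abs{1-\lambda h}\,y_n=y_{n-1}+h^2\mu\sum_{i=0}^{n-1}y_i,
\]
with $\mu$ depending only on $L$ and the weight bound. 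Lemma~\ref{lem..ZeroStability} gives $y_n\le\exp(2\Abs{\lambda}nh+\mu n^2h^2)y_0$ for $0<h<\tfrac{1}{2\Abs{\lambda}}$; since $nh\le T$ and $n^2h^2=(nh)^2\le T^2$, the right-hand side is at most $\exp(2\Abs{\lambda}T+\mu T^2)y_0$, a constant $C(T)$ independent of $h$. Tracing $y_0$ back to $\sup_{0\le i\le q-1}\Norm{\vx_i-\tilde{\vx}_i}_{\infty}$ and reinstating the equivalence constant $M$ recovers exactly the estimate~\eqref{eq...DefinitionofNumericalStability} of Definition~\ref{Definition..NumeicalStability}.

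I expect the main obstacle to lie in this core step, namely forcing the error recursion into the precise form required by Lemma~\ref{lem..ZeroStability}. Three points are delicate: the implicit relation defining $\vpsi$ must be uniquely solvable for $h$ small, so that $\lambda$ is well defined and the $\Abs{1-\lambda h}$ factor is legitimate; one must verify that the memory contribution genuinely scales like $h^2$ rather than $h$ — a single power would destroy the $h$-uniformity of the bound — which hinges on the quadrature weights carrying the step-size factor so that the discrete memory sum is a faithful Riemann approximation of $\int_0^{t_n}$; and the block recursion, whose homogeneous matrix is only power-bounded and not contractive in the original norm, must be reduced to a scalar inequality without corrupting the unit coefficient in front of $y_{n-1}$, which is where the adapted norm $\Norm{\cdot}_{*}$ is essential. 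Once these are settled, the conclusion follows immediately from the lemma.
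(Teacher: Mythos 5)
Your proposal is correct, and at the structural level it is the same proof as the paper's: pass to the one-step block form \eqref{eq...formulationonestep}, use Assumption~\ref{assump..Lipschitz} together with the coefficient bounds $\Abs{\alpha_k},\Abs{\beta_k}\le M$, $\Abs{w_{n,i}}\le Mh$ to show that the $\vf$-part of $\mPhi_n-\tilde{\mPhi}_n$ contributes at order $h$ while the memory part contributes at order $h^2\sum_{j}\Norm{\vX_j-\tilde{\vX}_j}$, solve the implicit relation for small $h$ so the Lipschitz factor of the implicit terms moves to the left, and close the argument with Lemma~\ref{lem..ZeroStability}, whose bound $\exp(2\Abs{\lambda}kh+\mu k^2h^2)\le\exp(2\Abs{\lambda}T+\mu T^2)$ is exactly the desired $C(T)$. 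The one place where you deviate is the treatment of the homogeneous part, and there your version is the correct one: the paper asserts that the root condition gives $\Norm{\mA\otimes\mI_d}_{2\to 2}\le 1$, which is false in general --- for BDF2 the companion matrix $\mA=\left(\begin{smallmatrix}4/3 & -1/3\\ 1 & 0\end{smallmatrix}\right)$ satisfies the root condition yet $\Norm{\mA}_{2\to 2}\approx 1.69$, and for AB2 one gets $\sqrt{2}$. The root condition yields only power-boundedness $\sup_{k\ge 0}\Norm{\mA^k}_{2\to 2}<\infty$, and your adapted norm $\Norm{u}_{*}=\sup_{k\ge 0}\Norm{(\mA\otimes\mI_d)^k u}_{\infty}$, in which the homogeneous map is genuinely non-expansive, is the standard device that converts power-boundedness into the unit coefficient in front of $y_{n-1}$ that Lemma~\ref{lem..ZeroStability} requires; the norm-equivalence constant then harmlessly enters only the final $C(T)$. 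So your write-up repairs the one step of the paper's argument that does not hold as written. A residual cosmetic point, shared with the paper: Lemma~\ref{lem..ZeroStability} is stated for an equality recursion, while the error sequence satisfies only the corresponding inequality; since all coefficients involved are nonnegative, the equality-defined sequence is a majorant by induction, so the application is legitimate but deserves a sentence in a polished version.
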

\begin{proof}
Let $\{\vX_n\}_{n\geq 0}$ and $\{{\tilde{\vX}}_n\}_{n\geq 0}$ be the numerical solutions with  initial values $\vX_0$ and ${\tilde{\vX}}_0$, then we have 
\begin{align*}
 {\vX}_{i+1}&=(\mA \otimes \mI_d) \vX_i+h \mPhi\left(\vX_i, h\right), \\ 
 {\tilde{\vX}}_{i+1}&=(\mA \otimes \mI_d) {\tilde{\vX}}_i+h \mPhi\left({\tilde{\vX}}_i, h\right). 
\end{align*}
Taking their difference leads to

\[ {\vX}_{i+1}- {\tilde{\vX}}_{i+1} =(\mA \otimes \mI_d) \left(\vX_i-{\tilde{\vX}}_i\right)+h \left(\mPhi\left(\vX_i, h\right)-\mPhi\left({\tilde{\vX}}_i, h\right)\right),\]
and by taking $2$-norm on both sides, 
\begin{equation}\label{eqproof...2Norm}
\begin{aligned} 
\Norm{{\vX}_{i+1}- {\tilde{\vX}}_{i+1}}_2&\leq \Norm{\mA \otimes \mI_d}_{2\to 2}\Norm{{\vX}_{i}- {\tilde{\vX}}_{i}}_2+h\Norm{\vpsi\left({{\vX}}_i, h\right)-\vpsi\left({\tilde{\vX}}_i, h\right)}_2\\
&\leq \Norm{{\vX}_{i}- {\tilde{\vX}}_{i}}_2+h\Norm{\vpsi\left({{\vX}}_i, h\right)-\vpsi\left({\tilde{\vX}}_i, h\right)}_2.
\end{aligned}
\end{equation}
Remark that $\Norm{\mA \otimes \mI_d}_{2\to 2} \leq 1$ in \eqref{eqproof...2Norm} as   \eqref{eq...thm...Stable+Root} satisfies the root condition.

Note that the coefficients $\left\{\alpha_k\right\}_{k=0}^{q-1}$,  $\left\{\beta_k\right\}_{k=0}^{q}$ and  series of weights $\left\{w_{n+q,i}\right\}_{i=0}^{n+q}$ satisfy that: For each $k$ and $i$,  
\[
\Abs{\alpha_k}, \quad  \Abs{\beta_k}\leq M, \quad\Abs{w_{n+q,i}}\leq M h,
\]
for some  universal constant $M$.   By Assumption \ref{assump..Lipschitz}, we have that 
\begin{align*}
&\Norm{\vpsi\left({{\vX}}_{n}, h\right)-\vpsi\left({\tilde{\vX}}_{n}, h\right)}_2\\
&\leq  L \Abs{\beta_q}\left(\sum_{i=1}^{q}\Abs{\alpha_{q-i}}\Norm{\vx_{n+q-i}-\tilde{\vx}_{n+q-i}}_2+h\Norm{\vpsi\left({{\vX}}_{n}, h\right)-\vpsi\left({\tilde{\vX}}_{n}, h\right)}_2\right)\\
&~~+L\left(\sum_{i=1}^{q}\Abs{\beta_{q-i}}\Norm{\vx_{n+q-i}-\tilde{\vx}_{n+q-i}}_2\right)\\
&~~+L\Abs{w_{n+q,n+q}} \left(\sum_{i=1}^{q}\Abs{\alpha_{q-i}}\Norm{\vx_{n+q-i}-\tilde{\vx}_{n+q-i}}_2+h\Norm{\vpsi\left({{\vX}}_{n}, h\right)-\vpsi\left({\tilde{\vX}}_{n}, h\right)}_2\right)\\
&~~+L\sum_{i=0}^{n+q-1}\Abs{w_{n+q,i}} \Norm{\vx_{i}-\tilde{\vx}_{i}}_2, 
\end{align*}
then we obtain that 
\begin{align*}
&\Norm{\vpsi\left({{\vX}}_{n}, h\right)-\vpsi\left({\tilde{\vX}}_{n}, h\right)}_2\\    
&\leq  LM^2\sqrt{q}\Norm{{\vX}_{n}- {\tilde{\vX}}_{n}}_2+LMh\Norm{\vpsi\left({{\vX}}_{n}, h\right)-\vpsi\left({\tilde{\vX}}_{n}, h\right)}_2\\
&~~+LM\sqrt{q}\Norm{{\vX}_{n}- {\tilde{\vX}}_{n}}_2\\
&~~+LMh\left(M\sqrt{q}\Norm{{\vX}_{n}- {\tilde{\vX}}_{n}}_2+ h\Norm{\vpsi\left({{\vX}}_{n}, h\right)-\vpsi\left({\tilde{\vX}}_{n}, h\right)}_2\right)\\
 &~~+LMh\sqrt{q}\left(\sum_{j=0}^{n}\Norm{{\vX}_{j}- {\tilde{\vX}}_{j}}_2\right),
\end{align*}
hence 
\begin{equation}\label{eq...lipschitzphi}
\begin{aligned}
&\left(1-LMh-LMh^2\right)\Norm{\vpsi\left({{\vX}}_{n}, h\right)-\vpsi\left({\tilde{\vX}}_{n}, h\right)}_2\\    
&\leq  \sqrt{q}\left(LM+LM^2+LM^2h\right)\Norm{{\vX}_{n}- {\tilde{\vX}}_{n}}_2  
  +LMh\sqrt{q}\left(\sum_{j=0}^{n}\Norm{{\vX}_{j}- {\tilde{\vX}}_{j}}_2\right).
\end{aligned}
\end{equation}
Then, if we  apply the inequality above to \eqref{eqproof...2Norm}, the following holds 
\begin{equation}\label{eqproof...2Norm...sup}
\begin{aligned} 
&\Norm{{\vX}_{i+1}- {\tilde{\vX}}_{i+1}}_2\\
& \leq
\Norm{{\vX}_{i}- {\tilde{\vX}}_{i}}_2+h\Norm{\vpsi\left({{\vX}}_i, h\right)-\vpsi\left({\tilde{\vX}}_i, h\right)}_2\\
&\leq \Norm{{\vX}_{i}- {\tilde{\vX}}_{i}}_2+h\frac{\sqrt{q}\left(LM+LM^2+LM^2h\right)}{1-LMh-LMh^2}\Norm{{\vX}_{i}- {\tilde{\vX}}_{i}}_2\\
&~~+h\frac{LMh\sqrt{q}}{1-LMh-LMh^2}\left(\sum_{i=0}^{n}\Norm{{\vX}_{j}- {\tilde{\vX}}_{j}}_2\right).
\end{aligned}
\end{equation}
As we set $y_k:=\Norm{{\vX}_{k}- {\tilde{\vX}}_{k}}_2$, then for sufficiently small $h>0$, by direction application of Lemma \ref{lem..ZeroStability}, we obtain that 
\[
y_k\leq \exp(2\Abs{\lambda}kh+\mu k^2 h^2)y_0,
\]
for some universal constants $\lambda>0$ and $\mu>0$, which finishes the proof. 
\end{proof}

\section{Convergence for Linear Multistep Methods}
\subsection{Consistency for LMM with quadrature}

\begin{lem}[Consistency for LMM with memory]\label{lem...consistent}
    If the classical LMM is consistent of order $p$, $p \geq 1$, and equipped with a quadrature of order $k$, $k \geq 1$, then this LMM is consistent and has a local truncation error  $\fO(h^{min\{p,k\} + 1})$.
\end{lem}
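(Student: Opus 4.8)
The plan is to bound the local truncation error $\vtau_n=\vx(t_n)-\vL_n(\{\vx(t_i)\}_{i=0}^{n-1})$ by first reducing it to the \emph{explicit residual} obtained when the exact solution is inserted into the scheme, and then splitting that residual into a classical-LMM piece governed by the order $p$ and a quadrature piece governed by the order $k$. First I would deal with the implicitness of $\vL_n$, which enters only through the $\beta_q$ term $h\beta_q\vf(\vL_n,t_n)$ and the closed-quadrature term $hw_{n,n}\vg(\vL_n,t_n,t_n)$. Let $R_n$ denote the quantity produced by replacing $\vL_n$ by $\vx(t_n)$ inside those two terms. Subtracting the defining relation for $\vL_n$ from the one for $R_n$ and applying the uniform Lipschitz bounds of Assumption~\ref{assump..Lipschitz}, together with $|\beta_q|\le M$ and $|w_{n,n}|\le Mh$, yields
\[
\Norm{\vtau_n}_{\infty}\le\frac{\Norm{R_n}_{\infty}}{1-hL\,|\beta_q|-Mh^2L},
\]
so for $h$ small enough it suffices to prove $\Norm{R_n}_{\infty}=\fO(h^{\min\{p,k\}+1})$. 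This is the same bootstrapping device already used in the proof of Theorem~\ref{thm...zerostable}.

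Next I would split $R_n$. Writing $\vG(t):=\int_0^t\vg(\vx(s),s,t)\diff s$ for the exact memory integral along the solution, the equation reads $\dot{\vx}(t)=\vf(\vx(t),t)+\vG(t)$, so that $\vf(\vx(t_{n-i}),t_{n-i})=\dot{\vx}(t_{n-i})-\vG(t_{n-i})$. Substituting this into $R_n$ and grouping terms gives
\[
R_n=\underbrace{\Big(\vx(t_n)-\sum_{i=1}^{q}\alpha_{q-i}\vx(t_{n-i})-h\sum_{i=0}^{q}\beta_{q-i}\dot{\vx}(t_{n-i})\Big)}_{=:A_n}+\underbrace{\Big(h\sum_{i=0}^{q}\beta_{q-i}\vG(t_{n-i})-h\sum_{i=0}^{n}w_{n,i}\vg(\vx(t_i),t_i,t_n)\Big)}_{=:B_n}.
\]
The term $A_n$ is exactly the classical local truncation error of the underlying LMM applied to the smooth function $\vx$; by the assumed classical consistency of order $p$ (the order conditions on $\{\alpha_k\},\{\beta_k\}$ via Taylor expansion), it satisfies $\Norm{A_n}_{\infty}=\fO(h^{p+1})$.

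It remains to estimate the quadrature term $B_n$, which measures the discrepancy between the exact $\beta$-weighted memory contribution and the numerical quadrature. I would regard $\{w_{n,i}\}$ as realizing the order-$k$ composite Newton--Cotes rule for the memory contribution present in the LMM's derivative discretization, applied to $s\mapsto\vg(\vx(s),s,\cdot)$ on the fixed interval $[0,t_n]\subseteq[0,T]$. The bracketed difference in $B_n$, \emph{without} the external factor $h$, is then a composite quadrature error of order $k$; since the integrand and the solution are sufficiently regular, its derivatives are bounded uniformly for $t_n\le T$, giving a bound $\fO(h^{k})$ uniform in $n$. The external prefactor $h$ upgrades this to $\Norm{B_n}_{\infty}=\fO(h^{k+1})$ — this factor $h$ is precisely the source of the ``$+1$'' in the exponent. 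Combining the two estimates, $\Norm{R_n}_{\infty}\le\Norm{A_n}_{\infty}+\Norm{B_n}_{\infty}=\fO(h^{\min\{p,k\}+1})$, and Step~1 then transfers the same bound to $\vtau_n$, which is the claim.

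The hard part will be the quadrature estimate of $B_n$. One must correctly pair the single-time memory sum $h\sum_i w_{n,i}\vg(\vx_i,t_i,t_n)$ — whose evaluations all carry the third argument $t_n$ — with the $\beta$-weighted combination $h\sum_i\beta_{q-i}\vG(t_{n-i})$ of exact memory integrals appearing in the derivative discretization (to leading order this equals $\big(\sum_{i=0}^q\beta_{q-i}\big)\vG(t_n)$), and then establish that the composite-rule error is genuinely $\fO(h^k)$ \emph{uniformly in $n$} on the growing interval $[0,t_n]$. Controlling this uniformity, rather than the routine Taylor expansion behind $A_n$, is the delicate point, and it is exactly where the regularity required by the phrase ``sufficiently regular differential equations'' in Definition~\ref{Definition....TruncationError} must be invoked.
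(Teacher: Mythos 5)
Your proposal is correct and takes essentially the same route as the paper's proof: the paper likewise splits the error into the truncation error of the classical LMM applied to $\dot{\vx}=\vf+\vG$ with the memory integral computed exactly (which is $\fO(h^{p+1})$ by order-$p$ consistency) plus the $\beta$-weighted quadrature replacement error, which carries the external factor $h$ and hence contributes $h\cdot\fO(h^{k})=\fO(h^{k+1})$, giving $\fO(h^{\min\{p,k\}+1})$ in total. Your preliminary reduction of the implicitly defined truncation error to the explicit residual via the Lipschitz bootstrap is a point of rigor that the paper's proof silently skips, but it does not alter the substance of the argument.
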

\begin{proof}
 Recall the scheme reads
    \begin{equation}\label{eq...quadrature}
        \vx_{n+q}^{\prime} = \sum_{i=1}^{q}\alpha_{q-i} \vx_{n+q-i} + h\sum_{i=0}^{q} \beta_{q-i}\vf_{n+q-i} + h\sum_{i=0}^{n+q} w_{n+q,i}\vg_{n+q,i}.
    \end{equation}
    Suppose that we can calculate the integral explicitly and denote the exact value of the integral by $\vG_n := \int_{t_0}^{t_{n}} \vg(\vx(s),s,t_{n}) \diff s$, then the scheme reads 
    \begin{equation}\label{eq...exactintegral}
        \vx_{n+q} = \sum_{i=1}^{q}\alpha_{q-i} \vx(t_{n+q-i}) + h\sum_{i=0}^{q} \beta_{q-i}(\vf(\vx(t_{n+q-i}),t_{n+q-i}) +\vG_{n+q-i}).
    \end{equation}
As the classical LMM is consistent of order $p$, then the local truncation error of \eqref{eq...exactintegral} satisfies 
    \begin{equation}
        \vtau^\prime_{n+q} :=  \vx(t_{n+q}) - \vx_{n+q}^{\prime}=\fO(h^{p+1}). 
    \end{equation}
   Since we have to apply quadratures $\vI_{n} :=  \sum_{i = 0}^{n} w^\prime_{n,i}\vg(\vx(t_i), t_i,t_n)$ to approximate $\vG_n$, and as the quadrature is of order $k$, we obtain that: For any $j\in[n+q]$, 
   \[\Norm{\vG_{j} - \vI_{j} }_{\infty}= \fO(h^{k}), \] 
   hence the truncation error of \eqref{eq...quadrature} reads
    \begin{equation}
    \begin{aligned}
        \vtau_{n+q } &=  \vx(t_{n+q}) -\vx_{n+q}\\
        &= \vx(t_{n+q}) -\vx^{\prime}_{n+q}+\vx^{\prime}_{n+q}- \sum_{i=1}^{q}\alpha_{q-i} \vx_{n+q-i} - h\sum_{i=0}^{q} \beta_{q-i}(\vf_{n+q-i}+\vI_{n+q-i} )\\
        &=\fO(h^{p+1})+h\fO(h^{k})=O(h^{\min\{p,k\}+1}).
    \end{aligned}
    \end{equation}

\end{proof}

\subsection{Convergence}\label{subsection...convergence}
In the case of LMM with quadrature, we still have the Dahlquist equivalence theorem. 
\begin{thm}[Convergence for LMM with memory]\label{thm...convergence}
    Under the Assumption~\ref{assump..Lipschitz}, if a linear $q$-step method
\begin{equation}\label{thm}
  \vx_{n+q}= \sum_{i=1}^{q}\alpha_{q-i}\vx_{n+q-i}+h\sum_{i=0}^{q}\beta_{q-i}\vf_{n+q-i}+h\sum_{i=0}^{n+q}w_{n+q,i}\vg_{n+q,i}, 
\end{equation}  satisfies the {root condition}, and consistent, then it is convergent. Moreover, if the method has  local truncation errors of order  $p$,  then the global error is convergent of order $p$. 
\end{thm}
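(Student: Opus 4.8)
The plan is to mirror the proof of Theorem~\ref{thm...zerostable}, but to compare the numerical trajectory against the \emph{exact} solution rather than against a second numerical trajectory; the role previously played by the discrepancy in initial data is now played by the accumulated local truncation error, which enters the recursion as an inhomogeneous forcing term. First I would feed the exact values $\{\vx(t_j)\}$ into the scheme \eqref{thm}: by Definition~\ref{Definition....TruncationError} the resulting defect is exactly the local truncation error $\vtau_{n+q}$. Subtracting the numerical scheme \eqref{thm} and writing $\ve_n:=\vx(t_n)-\vx_n$ gives a linear recursion for the global error whose $\alpha$-part is the companion-matrix action. Stacking the errors and using the one-step formulation \eqref{eq...formulationonestep} with stacked exact values $\vX_n^{*}$, I obtain
\[
\vE_{n+1} = (\mA\otimes\mI_d)\vE_n + h(\ve_1\otimes\mI_d)\big(\vpsi(\vX_n^{*})-\vpsi(\vX_n)\big) + (\ve_1\otimes\mI_d)\vtau_{n+q}.
\]

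Next I would reuse verbatim the two estimates from the proof of Theorem~\ref{thm...zerostable}. The root condition gives $\Norm{\mA\otimes\mI_d}_{2\to2}\leq 1$, and Assumption~\ref{assump..Lipschitz} together with the uniform bounds $\Abs{\alpha_k},\Abs{\beta_k}\leq M$ and $\Abs{w_{n+q,i}}\leq Mh$ yields the Lipschitz estimate for $\vpsi$ carrying the characteristic memory double-sum,
\[
\Norm{\vpsi(\vX_n^{*})-\vpsi(\vX_n)}_2 \leq C_1\Norm{\vE_n}_2 + C_2 h\sum_{j=0}^{n}\Norm{\vE_j}_2,
\]
valid for $h$ small enough that $1-LMh-LMh^2>0$. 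Taking $2$-norms in the recursion above and setting $y_k:=\Norm{\vE_k}_2$ and $r_k:=\Norm{\vtau_{k+q}}$, I arrive at an inequality of the form
\[
\Abs{1-\lambda h}\,y_{n+1} \leq y_n + h^2\mu\sum_{j=0}^{n}y_j + r_{n+1},
\]
which is precisely the recursion of Lemma~\ref{lem..ZeroStability} augmented by the forcing $r_{n+1}$.

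To close the argument I would treat the homogeneous bound of Lemma~\ref{lem..ZeroStability} as a discrete Green's function. Since $kh\leq T$ gives $\exp(2\Abs{\lambda}kh+\mu k^2h^2)\leq \exp(2\Abs{\lambda}T+\mu T^2)=:C(T)$, superposition over the injected defects yields
\[
\Norm{\vE_n}_2 \leq C(T)\Big(\Norm{\vE_0}_2 + \sum_{m=1}^{n}\Norm{\vtau_{m+q}}\Big).
\]
Finally I would count the contributions: there are at most $\lfloor T/h\rfloor$ steps, and by Lemma~\ref{lem...consistent} a scheme of order $p$ has $\Norm{\vtau_{m+q}}=\fO(h^{p+1})$, so $\sum_m\Norm{\vtau_{m+q}}=\fO(T/h)\cdot\fO(h^{p+1})=\fO(h^{p})$; with starting errors of order $p$ (or exact) this gives $\Norm{\ve_n}_\infty=\fO(h^{p})$ uniformly in $n\in[\lfloor T/h\rfloor]$. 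Bare convergence follows the same way: a consistent method is, via Lemma~\ref{lem...consistent}, at least of effective order one, so the accumulated defect is $\fO(h^{\min\{p,k\}})=o(1)$, whence $\Norm{\ve_n}_\infty=o(1)$.

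The main obstacle I anticipate is the inhomogeneous accumulation in the last step: Lemma~\ref{lem..ZeroStability} is stated only for the homogeneous recursion, so I must justify the superposition over the forcing terms $r_m$ in the presence of the memory double-sum $\sum_{j\le n}y_j$, since each injected defect feeds not only the next step but the entire future through the quadrature, and I must verify that the uniform bound $C(T)$ survives this accumulation without an extra factor that degrades the rate. A secondary technical point is confirming that the constant bounding the quadrature weights, $\Abs{w_{n+q,i}}\leq Mh$, is uniform in $n$; this uniformity is exactly what converts the inner memory sum into the $h^2\mu\sum_j y_j$ structure demanded by the hypothesis of Lemma~\ref{lem..ZeroStability}.
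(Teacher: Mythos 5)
Your proposal is correct, and its skeleton is the same as the paper's: pass to the one-step formulation \eqref{eq...formulationonestep}, use the root condition to get $\Norm{\mA \otimes \mI_d}_{2\to 2} \leq 1$, and then run an error-recursion/Gronwall argument against the exact solution. The difference is one of completeness rather than of route: the paper's entire proof is three lines, outsourcing the Gronwall step to ``reproducing the proof procedures'' of Hairer's Theorem 4.5, whereas you execute that step inside the paper's own framework, reusing the Lipschitz estimate for $\vpsi$ from the proof of Theorem~\ref{thm...zerostable} and extending Lemma~\ref{lem..ZeroStability} to an inhomogeneous recursion. The obstacle you flag --- justifying superposition over the injected defects --- does resolve cleanly: the recursion has nonnegative, time-independent coefficients, so the inequality solution is dominated by the equality solution; the equality solution decomposes exactly into shifted homogeneous solutions, one per defect $r_m$, each vanishing before step $m$ and hence seeing only the memory sum from index $m$ onward; and Lemma~\ref{lem..ZeroStability} applies to each shifted piece by time-invariance, giving the uniform factor $\exp(2\Abs{\lambda}kh + \mu k^2 h^2) \leq C(T)$ with no loss of rate. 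Your accounting ($T/h$ steps, each contributing $\fO(h^{p+1})$) then yields the order-$p$ global bound, which is precisely the structure of the classical argument the paper invokes by citation; in that sense your writeup is a more self-contained proof of the theorem than the one the paper actually prints.
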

\begin{proof} 
We recall the formulation \eqref{eq...formulationonestep}, where \eqref{thm} can be written  as one-step methods. Moreover, we have $\Norm{(\mA \otimes \mI_d)}_{2\to 2} \leq 1$ since \eqref{thm} satisfies the root condition. Hence, we are able to reproduce all the proof procedures in \cite[Theorem 4.5]{Hairer1987solving}.  
\end{proof}
\section{Weak A-Stability}\label{section....A-StabilityODEwithMemory}
\subsection{Definition of Weak A-Stability}\label{subsection.....EquivalentDefinitionforAstability}
Recall that the general solution for   the test problem
    \begin{equation} 
        \dot{x}  (t) = \lambda x(t), \quad x(0)=x_0,
    \end{equation} 
has the form $ x(t) = x_0\exp( \lambda t)$, and as  $\Re(\lambda)<0$, we have that $x(t) \to 0$ as $t \to\infty$, regardless of the value of $x_0$.
Hence, in order to extend the  notion of A-Stability  to   ODEs with memory,  we endeavor to choose an appropriate class of test problems, and  dig into the long time behavior of the solutions to the test problems.
Our test problem is chosen to be a linear equation equipped with a stationary memory kernel, i.e.,
\begin{equation}\label{eq..text...LinearTestEquationwithMemory}
    \dot{x}(t)=\lambda x+\int_0^t k(t-s)x(s)\diff{s}.
\end{equation}
\emph{WLOG, we assume throughout this note that}   $\lambda\in {\mathbb{R}}$,  $k(\cdot):\overline{\mathbb{R}}_+\to \mathbb{R}$, \emph{and furthermore}, $k(x)$ \emph{is an} ${L}^1$\emph{ function over the region} $\overline{\mathbb{R}}_+$, \emph{ i.e.}, $k(t)\in {L}^1\left(\overline{\mathbb{R}}_+\right)$.  In order for the property of the long time behavior of  the solution $x(\cdot)$ to our test  problem \eqref{eq..text...LinearTestEquationwithMemory}   to be discovered,   the \textbf{Laplace Transform}\cite{spiegel1965laplace} needs to be employed.
\begin{defi}[Laplace transform]
\label{def..LaplaceTransform}
    The \textbf{Laplace transform} of a locally integrable function $x(\cdot):\overline{\mathbb{R}}_+ \to \sR$  is defined by
    \begin{equation}\label{eq...def..LaplaceTransform}
        \mathcal{L}\{x\}(w)=\int_{0}^{\infty}x(t)\exp(-wt) \diff t,
    \end{equation}
    for those $w\in\mathbb{C}$  where the integral makes sense.
\end{defi}
\begin{prop}[Sufficient condition for Absolute stability]
\label{prop...SolutiontoTestEquationwithMemory}
    If
    \begin{equation}\label{eq..prop..condition}
        \lambda+\int_{0}^{\infty} \Abs{k(\xi)}  \diff \xi<0,
    \end{equation}
    then for all choices of initial condition $x_0$, the exact solution   to   \eqref{eq..text...LinearTestEquationwithMemory} satisfies
    \begin{equation}\label{eq...prop...LimitatInfinity}
        \lim_{t\rightarrow+\infty} x(t)=0.
    \end{equation}

\end{prop}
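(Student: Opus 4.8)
The plan is to pass to the frequency domain via the Laplace transform, locate the singularities of the transformed solution, and convert their position back into time-domain decay. First I would record that the linear problem \eqref{eq..text...LinearTestEquationwithMemory} has a unique continuous solution of at most exponential growth: writing the equation in integrated form, estimating the double integral by $\int_0^t\Abs{x(s)}\int_s^t\Abs{k(\tau-s)}\diff\tau\,\diff s\leq\left(\int_0^\infty\Abs{k}\right)\int_0^t\Abs{x}$, and applying Gronwall's inequality yields $\Abs{x(t)}\leq\Abs{x_0}\exp(\gamma t)$ for some $\gamma>0$. Hence $X(w):=\mathcal{L}\{x\}(w)$ is well defined and analytic for $\Re(w)>\gamma$. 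Taking Laplace transforms of \eqref{eq..text...LinearTestEquationwithMemory}, using $\mathcal{L}\{\dot x\}(w)=wX(w)-x_0$ and the convolution rule $\mathcal{L}\{k*x\}=K(w)X(w)$ with $K:=\mathcal{L}\{k\}$, gives $(w-\lambda-K(w))X(w)=x_0$, i.e.
\[
X(w)=\frac{x_0}{q(w)},\qquad q(w):=w-\lambda-K(w).
\]

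The key step is to show that the characteristic function $q$ has no zero in the closed right half-plane $\{\Re(w)\geq 0\}$. Since $k\in L^1(\overline{\mathbb{R}}_+)$, for every $w$ with $\Re(w)\geq 0$ one has the uniform bound $\Abs{K(w)}\leq\int_0^\infty\Abs{k(t)}\exp(-\Re(w)t)\diff t\leq\int_0^\infty\Abs{k(t)}\diff t$. Suppose, for contradiction, that $q(w_\ast)=0$ for some $w_\ast$ with $\Re(w_\ast)\geq 0$. Then $w_\ast=\lambda+K(w_\ast)$, and taking real parts (recall $\lambda\in\mathbb{R}$) gives $\Re(w_\ast)=\lambda+\Re(K(w_\ast))\leq\lambda+\Abs{K(w_\ast)}\leq\lambda+\int_0^\infty\Abs{k(\xi)}\diff\xi<0$ by hypothesis \eqref{eq..prop..condition}, contradicting $\Re(w_\ast)\geq 0$. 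Thus $q$ is zero-free on $\{\Re(w)\geq 0\}$. Moreover $K$ is analytic on the open right half-plane and, by the Riemann--Lebesgue lemma, $K(w)\to 0$ as $\Abs{w}\to\infty$ there, so $q(w)\sim w$ for large $\Abs{w}$; combining this with the continuity and nonvanishing of $q$ on compact subsets shows $\Abs{q}$ is bounded below by a positive constant uniformly on the whole closed right half-plane.

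Finally I would translate this spectral information into decay. The function $X$ is the Laplace transform of the solution and all its singularities, being zeros of $q$, lie in the open left half-plane; this is exactly the hypothesis of the Paley--Wiener theorem for the resolvent of a convolution Volterra equation. It yields that the resolvent kernel $r$, determined by $\mathcal{L}\{r\}(w)=1/q(w)$ and $x(t)=x_0\,r(t)$, belongs to $L^1(\overline{\mathbb{R}}_+)$ and satisfies $r(t)\to 0$ as $t\to\infty$; hence $x(t)\to 0$, which is \eqref{eq...prop...LimitatInfinity}. Alternatively one may invoke the Bromwich inversion formula and shift the contour $\Re(w)=\gamma$ leftwards across the imaginary axis, the uniform lower bound on $\Abs{q}$ and the decay of $X$ at infinity justifying the shift and producing an exponentially small factor.

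I expect the main obstacle to be precisely this last frequency-to-time passage, not the zero-counting estimate, which is elementary. The difficulty is that $K$ is only continuous --- not analytic --- across the imaginary axis, its boundary values being the Fourier transform of an $L^1$ function, so a naive finite residue calculus is unavailable for a general $L^1$ kernel. One must instead control $X$ on and near the entire imaginary axis, combining compactness at bounded frequencies with the asymptotics $q(w)\sim w$ at large frequencies to obtain a genuine integrability and decay statement; this is exactly what the Paley--Wiener machinery supplies, and it is the analytic heart of the argument.
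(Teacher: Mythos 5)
Your proof is correct, and its transform-side skeleton coincides with the paper's: both take Laplace transforms to obtain $\mathcal{L}\{x\}(w)=x_0/q(w)$ with $q(w)=w-\lambda-\mathcal{L}\{k\}(w)$, and both rest on $q$ having no zeros in the closed right half-plane. The two proofs diverge in how they establish and then exploit that fact, and in both respects yours is the more rigorous route. First, you actually prove the zero-free property (taking real parts and using $\Abs{\mathcal{L}\{k\}(w)}\leq\int_0^\infty\Abs{k(\xi)}\diff\xi$ for $\Re(w)\geq 0$), whereas the paper only records the weaker consequences $\lambda<0$ and $\lambda+\mathcal{L}\{k\}(0)<0$ and asserts the conclusion. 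Second, and more substantively, the paper converts the spectral information into decay via the final value theorem, whose hypotheses are phrased in terms of the poles of $w\mathcal{L}\{x\}(w)$; as you point out, for a general $L^1$ kernel $\mathcal{L}\{k\}$ is analytic only in the open half-plane and merely continuous up to the imaginary axis, so $\mathcal{L}\{x\}$ need not be meromorphic there, and the elementary final value theorem moreover presupposes that $\lim_{t\to\infty}x(t)$ exists --- which is part of what must be shown. Your substitute, the half-line Paley--Wiener (Grossman--Miller) theorem for the differential resolvent of a convolution integro-differential equation, is exactly the tool that makes this step honest: the zero-free condition on the closed right half-plane is equivalent to $r\in L^1(\overline{\mathbb{R}}_+)$, and then $x=x_0\,r\to 0$; your preliminary Gronwall bound also legitimizes the transform manipulations that the paper performs formally. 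Two minor tightenings: the cited theorem delivers $r\in L^1$, and the decay $r(t)\to 0$ needs the one-line supplement that $\dot r=\lambda r+k\ast r\in L^1(\overline{\mathbb{R}}_+)$ by Young's inequality, so $r(t)$ has a limit at infinity, which integrability forces to be zero; and the uniform lower bound on $\Abs{q}$ over the closed half-plane, while true, becomes superfluous once the resolvent theorem is quoted (your alternative contour-shift argument, by contrast, would founder on precisely the non-analyticity across the axis that you flag). What the paper's route buys is brevity; what yours buys is a proof valid for every $L^1$ kernel, at the price of invoking a nontrivial piece of Volterra-equation theory.
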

\begin{proof}
    Taking Laplace transform on \eqref{eq..text...LinearTestEquationwithMemory}, we have
    \begin{equation*}
        w\mathcal{L}\{x\}(w)-x(0)=\lambda\mathcal{L}\{x\}(w)+\mathcal{L}\{k\}(w)\mathcal{L}\{x\}(w).
    \end{equation*}
    Thus
    \begin{equation*}
        \mathcal{L}\{x\}(w)
        =\frac{x(0)}{w-\lambda-\mathcal{L}\{k\}(w)}.
    \end{equation*}
    Apply final value theorem, we obtain that
    \begin{equation*}
        \lim_{t\rightarrow+\infty} x(t)
        = \lim_{w\rightarrow 0}w \mathcal{L}\{x\}(w)
        = \lim_{w\rightarrow 0}\frac{wx(0)}{w-\lambda-\mathcal{L}\{k\}(w)}.
    \end{equation*}
    The standard assumptions for the final value theorem\cite{schiff1999laplace,spiegel1965laplace}
    ~require that the Laplace transform have all of its poles either in the open-left-half plane or at the origin, with at most a single pole at the origin. Our condition \eqref{eq..prop..condition} implies that $\lambda < 0$, and
    \begin{equation}\label{eq..proof...prop...SolutiontoTestEquationwithMemory}
        \lambda+\mathcal{L}\{k\}(0)<0.
    \end{equation}
    Then $w \mathcal{L} \{x\} (w)$ have no poles in the open-right-half plane and on the imaginary line, which finishes the proof.
 
\end{proof}
\noindent Proposition \ref{prop...SolutiontoTestEquationwithMemory} reveals that both   $\lambda$ and the kernel   $k(\cdot)$ have impact on the long time behavior of the solution $x(\cdot)$, hence it is natural that both components have impact on our notions of the regions of absolute stability  for ODEs with memory. 
\begin{defi}[Regions of Absolute Stability]\label{def..RegionsofA--Stability}
    We define the \textbf{region of absolute stability for the exact solution}  to  the test problem \eqref{eq..text...LinearTestEquationwithMemory} as 
\begin{equation}\label{eq...def...RegionStabilityExactSoln}
        \mathcal{R}_\text{exact}
        := \left\{(\lambda,k(\cdot)): \lim_{t\rightarrow+\infty}x(t)=0 \ \text{for any initial values $x_0$}\right\},
    \end{equation}
    and we define the \textbf{region of absolute stability for the numerical scheme} as
 \begin{equation}\label{eq...def...RegionStabilityNumericalSoln}
        \mathcal{R}_\text{num}
        := \left\{(\lambda,k(\cdot)): \lim_{n\rightarrow+\infty}x_n=0 \ \text{for any  step size $h>0$ and   initial values $x_0$}\right\}.
    \end{equation}
\end{defi}
\noindent Then directly from Proposition \ref{prop...SolutiontoTestEquationwithMemory}, we obtain that
\begin{equation}\label{eq...text....SetIncludueExactSoln}
    \left\{(\lambda,k(\cdot)): \lambda+\int_{0}^{\infty} \Abs{k(\xi)} \diff \xi <0\right\}\subseteq \mathcal{R}_\text{exact},
\end{equation} 
and  we are able to give out the definition of \textbf{weak A-stability}.
\begin{defi}[Weak A-Stability]\label{def..WeakAbosoluteStability}
    A LMM designed for test problem \eqref{eq..text...LinearTestEquationwithMemory}  is \textbf{weak A-stable} if the following relation holds
    \begin{equation}\label{eq...def...A-StabilitySetContaining}
        \left\{(\lambda,k(\cdot)): \lambda+\int_{0}^{\infty} \Abs{k(\xi)} \diff \xi <0\right\}\subseteq \mathcal{R}_\text{num}.
    \end{equation}
\end{defi}
\noindent   We would like to show that   Definition \ref{def..WeakAbosoluteStability} is coherent with Definition \ref{def...AbsoluteStability} in that  for the test problem \eqref{eq...text...TestEqforLinearODE}, since $\lambda\in\sR$ and $k(\cdot)\equiv 0$,  then the region of absolute stability for the exact solution   reads
$$\mathcal{R}_{\text{exact}}=\left\{(\lambda,0):  \lambda+0=\lambda <0 \right\},$$
and the  region of absolute stability for the numerical scheme  $\mathcal{R}_\text{num}$ coincides with $\mathcal{R}$ in Definition \ref{def...RegionofAbsoluteStability}, hence  the set relation \eqref{eq...def...A-StabilitySetContaining}  reads
\begin{equation*}
   \left\{(\lambda,0): \lambda  <0\right\}\subseteq \mathcal{R}_\text{num}=\mathcal{R},
\end{equation*}
which is exactly the case in \eqref{eq...def....A-Stable}.
\subsection{Weak A-Stability for One-step Methods}
Our results concerning weak A-stability are essentially based on the following lemma.
\begin{lem}[Induction lemma for weak A-stability]
\label{lem..AStability}
For $\lambda\in\sR$, given $0<\beta\leq 1$, and a  non-negative  absolutely convergent  infinite series   $\{k_n\}_{n\geq 1}$ whose summands equal to $\kappa$, i.e., $\sum_{n=1}^\infty k_n=\kappa$, then for  all $h>0$ and  ${\lambda} <-\kappa$,  consider the non-negative sequence    $\{y_n\}_{n\geq 0}$   satisfying
\begin{equation}\label{eq...lem...AStability}
(1-\beta\lambda h)y_n=[1+(1-\beta)\lambda h]y_{n-1}+\beta h\sum_{i=1}^{n-1} k_{n-i}y_{i}+(1-\beta) h\sum_{j=1}^{n-2} k_{n-j}y_{j},
\end{equation}
we have that   
\begin{equation} 
        \lim_{n\to\infty}y_n=0.
\end{equation}
\end{lem}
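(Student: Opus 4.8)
The plan is to mimic, at the discrete level, the Laplace-transform argument used for Proposition~\ref{prop...SolutiontoTestEquationwithMemory}: I replace the Laplace transform by the generating function (discrete Z-transform) and the final value theorem by an analysis of the generating function near $z=1$. Writing $A:=1-\beta\lambda h$ and $B:=1+(1-\beta)\lambda h$, the recurrence \eqref{eq...lem...AStability} reads $A y_n = B y_{n-1} + \beta h\sum_{i=1}^{n-1}k_{n-i}y_i + (1-\beta)h\sum_{j=1}^{n-2}k_{n-j}y_j$ for $n\ge1$, with empty sums read as $0$. I first check that this forces $y_n$ to grow at most geometrically, so that the generating functions $Y(z):=\sum_{n\ge0}y_n z^n$ and $K(z):=\sum_{m\ge1}k_m z^m$ have positive radii of convergence, that of $K$ being at least $1$ since $\sum_m k_m=\kappa<\infty$.

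Multiplying the recurrence by $z^n$ and summing over $n\ge1$ turns the two memory sums into Cauchy products with $K$ (the shifted upper limit $n-2$ in the second sum simply suppresses its $m=1$ contribution, producing a correction $-(1-\beta)k_1 z$). Collecting terms, valid within the radius of convergence and then extended by analytic continuation, yields the closed form $P(z)\,Y(z)=N(z)$, where $P(z):=A-Bz-h\tilde K(z)$ is the discrete characteristic function, $\tilde K(z):=K(z)-(1-\beta)k_1 z$, and $N(z):=y_0\,(A-h\tilde K(z))$. This is the exact analogue of the formula $\mathcal L\{x\}(w)=x(0)/\bigl(w-\lambda-\mathcal L\{k\}(w)\bigr)$ obtained in Proposition~\ref{prop...SolutiontoTestEquationwithMemory}, and the hypothesis $\lambda<-\kappa$ will play the role of \eqref{eq..prop..condition}.

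The heart of the argument is to show that $P(z)>0$ for every $z\in[0,1]$. Using $\lambda<0$ one rewrites $P(z)=(1-z)+|\lambda|h\bigl(\beta+(1-\beta)z\bigr)+(1-\beta)h k_1 z-hK(z)$; since $0\le K(z)\le\kappa z$ on $[0,1]$ (because $z^m\le z$ there and $k_m\ge0$), $P(z)$ is bounded below by the affine function $\ell(z):=(1-z)+|\lambda|h\bigl(\beta+(1-\beta)z\bigr)-h\kappa z$. One computes $\ell(0)=1+\beta|\lambda|h>0$ and $\ell(1)=h(|\lambda|-\kappa)=-h(\lambda+\kappa)>0$, and an affine function positive at both endpoints of $[0,1]$ is positive throughout; hence $P>0$ on $[0,1]$. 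This is precisely where $0<\beta\le1$ and $\lambda+\kappa<0$ enter, mirroring the sign condition $\lambda+\mathcal L\{k\}(0)<0$ of \eqref{eq..proof...prop...SolutiontoTestEquationwithMemory}.

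It remains to pass from positivity of $P$ on $[0,1]$ to $\lim_{n\to\infty}y_n=0$, and this is the step I expect to be the main obstacle: because $K$ need not extend analytically beyond the closed unit disk, geometric decay of $y_n$ cannot be expected in general, so a Tauberian-type argument is required, in which I would use the non-negativity of $\{y_n\}$ decisively. Let $\rho$ be the radius of convergence of $Y$. On $[0,\min(\rho,1))$ the identity $Y=N/P$ holds, and since $P>0$ and $N$ is continuous on the compact set $[0,1]$, the ratio $N/P$ is bounded there by a constant $C$. If $\rho<1$, then $N/P$ is analytic near $\rho$ (as $P(\rho)>0$ and $K$ is analytic for $|z|<1$), giving an analytic continuation of $Y$ past $\rho$ and contradicting Pringsheim's theorem, by which $z=\rho$ must be a singular point of a power series with non-negative coefficients; hence $\rho\ge1$. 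Consequently, by the monotone convergence theorem, $\sum_{n\ge0}y_n=\lim_{z\to1^-}Y(z)=\lim_{z\to1^-}N(z)/P(z)\le C<\infty$, so the non-negative series $\sum_n y_n$ converges and therefore $y_n\to0$, as claimed.
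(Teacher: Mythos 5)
Your proposal is correct, but it takes a genuinely different route from the paper. The paper proves Lemma~\ref{lem..AStability} by a purely elementary double induction: it introduces the explicit contraction factor $\alpha:=\tfrac{1}{2}\bigl(\tfrac{1+(1-\beta)\lambda h+\kappa h}{1-\beta\lambda h}+1\bigr)<1$, first shows $y_n\le\alpha$ for all $n\ge 1$ directly from \eqref{eq...lem...AStability}, and then shows by induction on $m$ that $y_n\le\alpha^m$ for all $n\ge N_m$, where the inductive step splits the memory sum at a cutoff $T_m$ chosen so that the tail $\sum_{i\ge T_m}k_i\le\tfrac{\alpha^{m-1}(-\lambda-\kappa)}{2}$ --- this is the only place absolute convergence of $\sum_n k_n$ enters. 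You instead discretize the Laplace-transform argument of Proposition~\ref{prop...SolutiontoTestEquationwithMemory}: I checked your algebra, and the identity $P(z)Y(z)=N(z)$ with $\tilde K(z)=K(z)-(1-\beta)k_1z$ and $N(z)=y_0\bigl(A-h\tilde K(z)\bigr)$ is right; the positivity of $P$ on $[0,1]$ via the affine minorant $\ell$ (with $\ell(1)=-h(\lambda+\kappa)>0$, exactly where $\lambda<-\kappa$ and $0<\beta\le 1$ are used) is right; and the two steps you flag as delicate --- Pringsheim's theorem to force the radius of convergence of $Y$ up to $1$, and monotone convergence to get $\sum_n y_n=\lim_{z\to 1^-}N(z)/P(z)<\infty$ --- are both correctly deployed, with non-negativity of $\{y_n\}$ doing the work in precisely those two places. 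The trade-off: your route needs complex-analytic machinery (Pringsheim) that the paper's bare-hands induction avoids, but it delivers a strictly stronger conclusion, namely summability $\sum_n y_n<\infty$ rather than just $y_n\to 0$, and it makes the discrete theory structurally parallel to the continuous-time stability analysis of the test equation, which is conceptually satisfying.
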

\begin{proof}
WLOG, we set $y_0=1$.
We claim that given any $m\in\mathbb{N}^+$, there exists   $N_m\in\mathbb{N}^+$, such that for all $n\geq N_m$,
\begin{equation}\label{eq...proof...ynandalpha}
  0<y_n\leq \alpha^m,  
\end{equation}
for some constant 
\[
\alpha:=
 \frac{1}{2}\left(\frac{1+(1-\beta)\lambda h+\kappa h}{1-\beta\lambda h}+1\right) <1.
 \]

 \noindent We  will prove \eqref{eq...proof...ynandalpha} by   induction.

\noindent(i). The base case: for $m=1$, we show that there is a constant $N_1$ such that for $n\geq N_1$, $0<y_n\leq \alpha$. Indeed, we set $N_1=1$. For $n=1$, we have 
\[y_1=\frac{1+(1-\beta)\lambda h}{1-\beta\lambda h}\leq \alpha.\]  
For $n=2$, we have 
\[y_2\leq \frac{1+(1-\beta)\lambda h+\kappa h}{1-\beta\lambda h}\leq \alpha.\]  
For $n\geq 2$, suppose that for all $j\in [n-1]$, we have $y_k\leq \alpha<1$, then for $j=n$, we have
\[
y_n\leq \frac{1+(1-\beta)\lambda h+h\sum_{i=1}^{n-1} k_{n-i}}{1-\beta\lambda h}\leq\frac{1+(1-\beta)\lambda h+\kappa h}{1-\beta\lambda h}\leq \alpha.
\]
   (ii). The inductive step: For $m\geq 2$,  suppose that  for all $l\in[m-1]$,  there exists $N_l$ such that for $n\geq N_l$, $0<y_n\leq \alpha^l$. Our goal is to prove that for $l=m$, there exists $N_m$, such that for $n\geq N_m$, $0<y_n\leq\alpha^m$.

As the  infinite series   $\{k_n\}_{n\geq 1}$ is absolute convergent, 
hence for   large enough $n$, there exists $T_m\in\mathbb{N}^+$, such that for any $\epsilon>0$,
\begin{equation*}
\sum_{i=T_m}^{n-1}k_{i}\leq \epsilon,
\end{equation*}
we choose $\epsilon=\frac{\alpha^{m-1}(-\lambda-\kappa)}{2}$, i.e., 
\begin{equation*}
\sum_{i=T_m}^{n-1}k_{i}\leq \frac{\alpha^{m-1}(-\lambda-\kappa)}{2},
\end{equation*}
and we set $N_m=N_{m-1}+T_m$, as  we have $y_i\leq \alpha^{m-1}$ for $i\geq n-T_m+1$,  then for $n\geq N_m$,
\begin{equation} 
\begin{aligned}
y_n&\leq \frac{(1+(1-\beta)\lambda h)\alpha^{m-1}+h\sum_{i=1}^{n-T_m}k_{n-i}+h\sum_{i=n-T_m+1}^{n-1}k_{n-i}\alpha^{m-1}}{1-\beta\lambda h}\\
&\leq\frac{h\sum_{i=T_m}^{n-1}k_{i}}{1-\beta\lambda h}+ \frac{\alpha^{m-1}(1+(1-\beta)\lambda h+h\sum_{i=1}^{\infty}k_{i})}{1-\beta\lambda h}\\
&\leq\frac{\alpha^{m-1}(-\lambda h-\kappa h)}{2(1-\beta\lambda h)}+\frac{\alpha^{m-1}(1+(1-\beta)\lambda h+\kappa h)}{1-\beta\lambda h}=\alpha^m,
\end{aligned}
\end{equation}
which   finishes our proof.
\end{proof}
\noindent
In the case where $\beta=1$, we have that 
\begin{prop}[Weak A-stablity for Backward Euler method]
\label{prop...BackwardEulerisAstable}
The Open Backward Euler method designed for \eqref{eq..text...LinearTestEquationwithMemory} 
\begin{equation}
 \frac{x_{n}-x_{n-1}}{h}=\lambda x_{n}+h\sum_{i=1}^{n-1}k_{n-i}x_{i},    
\end{equation}
equipped with the weight   $k_{n-i}:=\int_{ih}^{(i+1)h}k(t_n-s)\diff{s}$  is weak A-stable.
 \end{prop}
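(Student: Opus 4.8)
The plan is to recast the numerical recursion into the exact form of the induction Lemma~\ref{lem..AStability} with $\beta=1$, and then read off weak A-stability directly from its conclusion. First I would clear the denominator in the scheme: the hypothesis $\lambda+\int_0^\infty\Abs{k(\xi)}\diff\xi<0$ together with $\int_0^\infty\Abs{k}\geq 0$ forces $\lambda<0$, so the coefficient $1-\lambda h$ is strictly positive for every $h>0$, and multiplying through gives a recursion of the shape
\[
(1-\lambda h)x_n = x_{n-1}+h\sum_{i=1}^{n-1}k_{n-i}x_i .
\]
The key structural observation is that the quadrature weights are \emph{stationary}: substituting $u=t_n-s$ in $k_{n-i}=\int_{ih}^{(i+1)h}k(t_n-s)\diff s$ gives $k_{n-i}=\int_{(n-i-1)h}^{(n-i)h}k(u)\diff u$, so that $k_j:=\int_{(j-1)h}^{jh}k(u)\diff u$ depends on the single index $j=n-i$. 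This is precisely the discrete convolution structure that Lemma~\ref{lem..AStability} is built around.

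The second step is to dispose of signs, since neither $x_n$ nor the weights $k_j$ need be non-negative, whereas the lemma is stated for non-negative data. I would set $y_n:=\Abs{x_n}$, take absolute values in the recursion, and use $1-\lambda h>0$ together with $\Abs{k_j}\leq\int_{(j-1)h}^{jh}\Abs{k(u)}\diff u$ to obtain the \emph{inequality}
\[
(1-\lambda h)y_n \leq y_{n-1}+h\sum_{i=1}^{n-1}\Abs{k_{n-i}}y_i .
\]
To apply the lemma verbatim I would introduce the comparison sequence $\{z_n\}$ defined by the corresponding \emph{equality} with weights $\Abs{k_j}$ and $z_0=y_0$; since all weights are non-negative and $1-\lambda h>0$, a short induction gives $y_n\leq z_n$ for every $n$. (The proof of Lemma~\ref{lem..AStability} only ever uses its recursion as an upper bound, so one could equivalently apply it directly to the inequality for $\{y_n\}$.)

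Third, I would check the three hypotheses of Lemma~\ref{lem..AStability} for $\{z_n\}$ with $\beta=1$. Non-negativity of the weight sequence $\{\Abs{k_j}\}_{j\geq1}$ is immediate; absolute convergence and the value of the sum follow from $k\in L^1(\overline{\mathbb{R}}_+)$, since $\sum_{j\geq1}\Abs{k_j}\leq\sum_{j\geq1}\int_{(j-1)h}^{jh}\Abs{k(u)}\diff u=\int_0^\infty\Abs{k(u)}\diff u=:\kappa$; and the standing hypothesis $\lambda+\int_0^\infty\Abs{k}\diff\xi<0$ yields exactly $\lambda<-\int_0^\infty\Abs{k}\leq-\kappa$. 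The lemma then gives $z_n\to0$, whence $y_n=\Abs{x_n}\leq z_n\to0$, i.e.\ $x_n\to0$. As $x_0$ and $h>0$ were arbitrary, every pair $(\lambda,k(\cdot))$ with $\lambda+\int_0^\infty\Abs{k}\diff\xi<0$ lies in $\mathcal{R}_\text{num}$, which is precisely the set inclusion in Definition~\ref{def..WeakAbosoluteStability}.

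I expect the only genuine obstacle to lie in the bookkeeping of the first two steps rather than in any new estimate: one must confirm that the stationary-weight reduction really lands inside the lemma's hypotheses, in particular tracking the single factor of $h$ between the scheme's memory term and the lemma's so that the threshold $\lambda<-\kappa$ is met \emph{uniformly} in $h>0$, and one must justify the passage from the sign-indefinite recursion to the non-negative comparison sequence $\{z_n\}$. The substantive long-time decay — the geometric bound $0<y_n\leq\alpha^m$ with $\alpha<1$ — has already been established in Lemma~\ref{lem..AStability}, so no further asymptotic analysis is required here.
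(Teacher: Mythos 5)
Your proposal is correct and takes essentially the same route the paper intends: the paper gives no separate proof of Proposition~\ref{prop...BackwardEulerisAstable}, presenting it purely as the $\beta=1$ case of Lemma~\ref{lem..AStability}, and your reduction --- the substitution $u=t_n-s$ showing the weights are stationary, $k_j=\int_{(j-1)h}^{jh}k(u)\diff u$, the absolute-value comparison sequence $\{z_n\}$ to handle sign-indefinite $x_n$ and $k_j$, and the bound $\sum_{j\geq 1}\Abs{k_j}\leq\int_0^\infty\Abs{k(u)}\diff u<-\lambda$ holding uniformly in $h>0$ --- supplies exactly the details the paper leaves implicit. The factor-of-$h$ issue you flag resolves the way you assumed: because the weights $k_{n-i}$ are themselves integrals over cells of length $h$, the cleared recursion must carry a single overall $h$ to match both the quadrature's consistency and the lemma (the extra $h$ in the paper's displayed scheme is an inconsistency of the paper's notation, and under a literal reading the claim would even fail for $h>1$), so your interpretation is the correct one.
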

\noindent
In the case where $\beta=\frac{1}{2}$, we have that 
\begin{prop}[Weak A-stability for Trapezoidal method]
The Open Trapezoidal method  designed for \eqref{eq..text...LinearTestEquationwithMemory}
\begin{equation}
 \frac{x_{n}-x_{n-1}}{h}=\frac{\lambda}{2} \left(x_{n}+x_{n-1}\right)+\frac{h}{2} \sum_{i=1}^{n-1} k_{n-i}y_{i}+ \frac{h}{2}\sum_{j=1}^{n-2} k_{n-j}y_{j},    
\end{equation}
equipped with the weight   $k_{n-i} =\int_{ih}^{(i+1)h}k(t_n-s)\diff{s}$  is weak A-stable. 
\end{prop}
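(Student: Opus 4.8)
The plan is to read off the Open Trapezoidal scheme as the instance $\beta=\tfrac12$ of the abstract recurrence in Lemma~\ref{lem..AStability}, mirroring how Proposition~\ref{prop...BackwardEulerisAstable} treats Backward Euler as the instance $\beta=1$. First I would multiply the scheme by $h$ and collect the $x_n$-terms on the left, obtaining
\[
\left(1-\tfrac{\lambda h}{2}\right)x_n=\left(1+\tfrac{\lambda h}{2}\right)x_{n-1}+\tfrac{h}{2}\sum_{i=1}^{n-1}k_{n-i}x_i+\tfrac{h}{2}\sum_{j=1}^{n-2}k_{n-j}x_j,
\]
which is exactly \eqref{eq...lem...AStability} with $\beta=\tfrac12$. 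The substitution $u=t_n-s$ in $k_{n-i}=\int_{ih}^{(i+1)h}k(t_n-s)\diff s$ shows the weights depend only on the index difference, $k_m=\int_{(m-1)h}^{mh}k(u)\diff u$, so $\{k_m\}_{m\ge1}$ is a genuine fixed convolution kernel as the lemma requires. By the triangle inequality $\sum_{m\ge1}\Abs{k_m}\le\int_0^\infty\Abs{k(\xi)}\diff\xi=:\kappa$, so the weak A-stability hypothesis $\lambda+\kappa<0$ (Definition~\ref{def..WeakAbosoluteStability}) gives $\lambda+\sum_{m\ge1}\Abs{k_m}\le\lambda+\kappa<0$, i.e. $\lambda<-\sum_m\Abs{k_m}$, the standing assumption of Lemma~\ref{lem..AStability}.

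Next I would pass from the signed iteration to a non-negative one so that the lemma applies verbatim. Setting $y_n:=\Abs{x_n}$ and taking absolute values above yields
\[
\left(1-\tfrac{\lambda h}{2}\right)y_n\le\Abs{1+\tfrac{\lambda h}{2}}y_{n-1}+\tfrac{h}{2}\sum_{i=1}^{n-1}\Abs{k_{n-i}}y_i+\tfrac{h}{2}\sum_{j=1}^{n-2}\Abs{k_{n-j}}y_j.
\]
When $1+\tfrac{\lambda h}{2}\ge0$ the right-hand side is precisely the lemma's recurrence with $k_m$ replaced by $\Abs{k_m}$, so a comparison argument — dominating $y_n$ by the solution of the corresponding equality recurrence, which is monotone in its inputs because all of its coefficients are then non-negative — together with Lemma~\ref{lem..AStability} yields $y_n\to0$, hence $x_n\to0$ for every admissible $x_0$. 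This covers the regime $0<h\le2/\Abs{\lambda}$, where the proof is word-for-word the $\beta=\tfrac12$ analogue of the Backward Euler argument.

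The main obstacle is the large-step regime $h>2/\Abs{\lambda}$, where $1+\tfrac{\lambda h}{2}<0$ and the triangle-inequality coefficient becomes $\Abs{1+\tfrac{\lambda h}{2}}=-1-\tfrac{\lambda h}{2}$, which is no longer literally of the lemma's form. Here I would lean on the memoryless A-stability of the trapezoidal rule: since $\Abs{1+\tfrac{\lambda h}{2}}<1-\tfrac{\lambda h}{2}$ for every $\lambda<0$, the one-step amplification $\Abs{1+\tfrac{\lambda h}{2}}\big/\bigl(1-\tfrac{\lambda h}{2}\bigr)$ is strictly below $1$, and the task reduces to checking that the convolution contribution $\tfrac{h}{2}\sum_m\Abs{k_m}\big/\bigl(1-\tfrac{\lambda h}{2}\bigr)$ does not destroy this contraction. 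I would establish this by the same geometric-decay bookkeeping used inside the proof of Lemma~\ref{lem..AStability}: split the memory sum into a tail, small by absolute convergence of $\{k_m\}$, and a head, controlled by the already-established decay of the earlier $y_i$, and iterate to drive $y_n$ to $0$. Absorbing this large-$h$ case into the lemma's framework — rather than re-running its induction by hand — is the one genuinely delicate point; everything else is the routine $\beta=\tfrac12$ specialization.
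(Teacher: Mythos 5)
Your first two steps are exactly the paper's proof: the paper offers no argument for this proposition beyond instantiating Lemma~\ref{lem..AStability} at $\beta=\tfrac12$ (just as Proposition~\ref{prop...BackwardEulerisAstable} is the instance $\beta=1$), and your rewriting of the scheme, the identification $k_m=\int_{(m-1)h}^{mh}k(u)\,\diff u$, and the verification $\lambda<-\sum_{m\ge1}\Abs{k_m}$ are that instantiation carried out explicitly. Your additional worry about signs is also well placed, and it is a point the paper silently skips: Lemma~\ref{lem..AStability} is stated for a \emph{non-negative} sequence driven by a \emph{non-negative} kernel, whereas the trapezoidal iterates (and, for a signed kernel $k$, the weights $k_m$) need not be non-negative. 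Your absolute-value-plus-comparison argument is a correct repair of this whenever $1+\tfrac{\lambda h}{2}\ge 0$, i.e.\ $h\le 2/\Abs{\lambda}$.

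The genuine gap is in your last paragraph. Write $\kappa'=\sum_{m\ge 1}\Abs{k_m}\le\kappa$. After taking absolute values, every mechanism in Lemma~\ref{lem..AStability} --- the choice of $\alpha<1$ in the base case and the head/tail splitting in the inductive step --- requires the one-step worst-case factor
\[
\frac{\Abs{1+\tfrac{\lambda h}{2}}+\kappa' h}{1-\tfrac{\lambda h}{2}}
\]
to be strictly below $1$. When $1+\tfrac{\lambda h}{2}<0$, the slack left by the trapezoidal amplification is $\bigl(1-\tfrac{\lambda h}{2}\bigr)^{-1}\bigl[(1-\tfrac{\lambda h}{2})-\Abs{1+\tfrac{\lambda h}{2}}\bigr]=2/\bigl(1-\tfrac{\lambda h}{2}\bigr)$, while the memory term loads $\kappa' h/\bigl(1-\tfrac{\lambda h}{2}\bigr)$; so the factor is below $1$ if and only if $\kappa' h<2$. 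Hence your plan does extend into part of the large-step regime (any $h$ with $\kappa' h<2$), but for a non-negative kernel (where $\kappa'=\kappa$ and absolute values cost nothing) and any $h\ge 2/\kappa$ the bound is not a contraction at all, and no head/tail splitting can rescue it: the head of the convolution sum alone already carries weight $\kappa' h\ge 2$, not a tail that can be made small. Since weak A-stability, via \eqref{eq...def...RegionStabilityNumericalSoln} in Definition~\ref{def..WeakAbosoluteStability}, demands $x_n\to 0$ for \emph{every} $h>0$, your argument proves the proposition only for $h<2/\kappa$, not in full. To be fair, the paper shares this hole --- its lemma simply does not apply to sign-changing iterates --- and closing it would require exploiting cancellation in the signed recurrence (the negative factor $1+\tfrac{\lambda h}{2}$ acting against the memory terms), not bounding absolute values.
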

\section{Numerical experiments}
In this section, we present several numerical examples to verify the proposed zero-stability and weak A-stability definitions and theorems for ordinary differential equations (ODEs) with memory. We consider different types of memory kernels and initial conditions, and compare the numerical solutions obtained by the proposed methods with the exact solutions or reference solutions. We also measure the errors and convergence rates of the methods, and demonstrate the stability of each method. The numerical examples illustrate the applicability and validity of our theoretical results for ODEs with memory.

\subsection{Zero-stability}
Firstly we perform numerical experiments to verify the zero-stability that we have proved above. As we have Theorem~\ref{thm...zerostable} that all the classical LMMs that satisfy the root condition are zero-stable after being equipped with open quadrature. In this subsection, we consider the most commonly used methods, i.e., Forward Euler, Backward Euler and BDF2 that are all zero-stable in our conclusion. We vary the time step from $h=1/2^2$ to $h=1/2^8$, and observe the performance of the numerical solutions.

\subsubsection{Example 1}
We consider the following ODE with memory:
\begin{equation}\label{ex...1}
    \dot{x}(t) = x(t) - \int_0^t 2\text{e}^{-1.9(t-s)}x(s)\,\diff s, \quad x(0) = 1 .
\end{equation}
We can see that the memory item in this example is an exponential function, thus we can obtain the exact soluiton of the equation through Laplace Transform. We first fix the quadrature as open Mid-point rule, and compare the stability between BDF2, Backward Euler and Forward Euler methods. The results are shown in Figure~\ref{fig:ex1-1}, where we can observe that when the step size $h$ is relatively small, the three methods can approximate the true solution well. However, as we increase $h$, different methods have errors due to accuracy issues. With the same quadrature as Mid-point rule, the performance of the Forward Euler is relatively poor, while that of the Backward Euler and BDF2 are similarly better. Moreover, to study the effects of different quadratures, we test BDF2 with Trapezoidal rule, Simpson rule and Milne rule. All of them have higher order than Mid-point rule and the results are shown in Figure~\ref{fig:ex1-2}. We can see that all the methods perform better than that with Mid-point rule as we expect. From this example, we conclude that these two implicit methods have superior numerical performance than the explicit Forward Euler. As for the usage of quadratures, the above tests show that the stable domain highly depends on the quadrature rules we use, which should be carefully chosen in applications.
\begin{figure}[htbp]
    \centering
    \includegraphics[width = 0.3\linewidth]{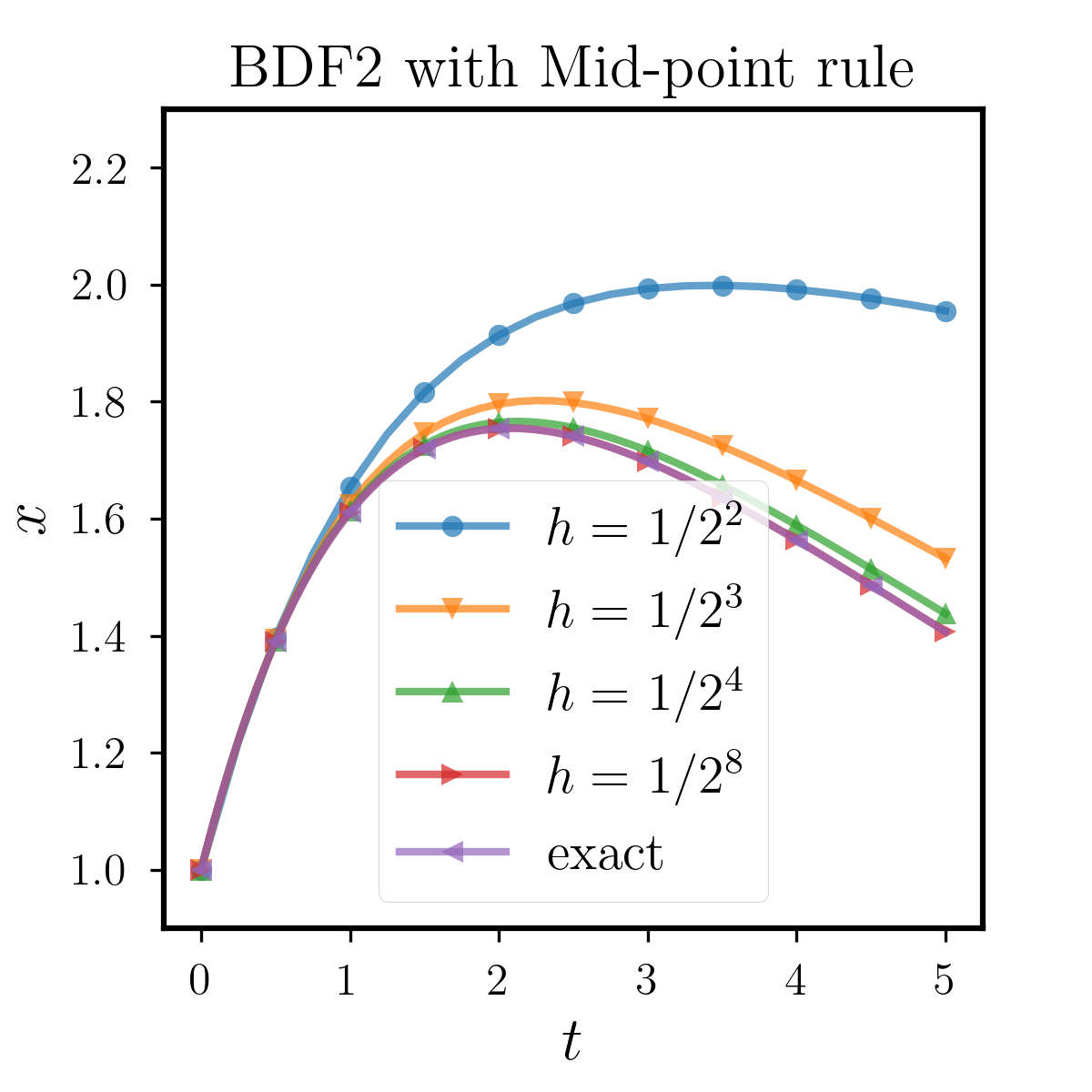}
    \includegraphics[width = 0.3\linewidth]{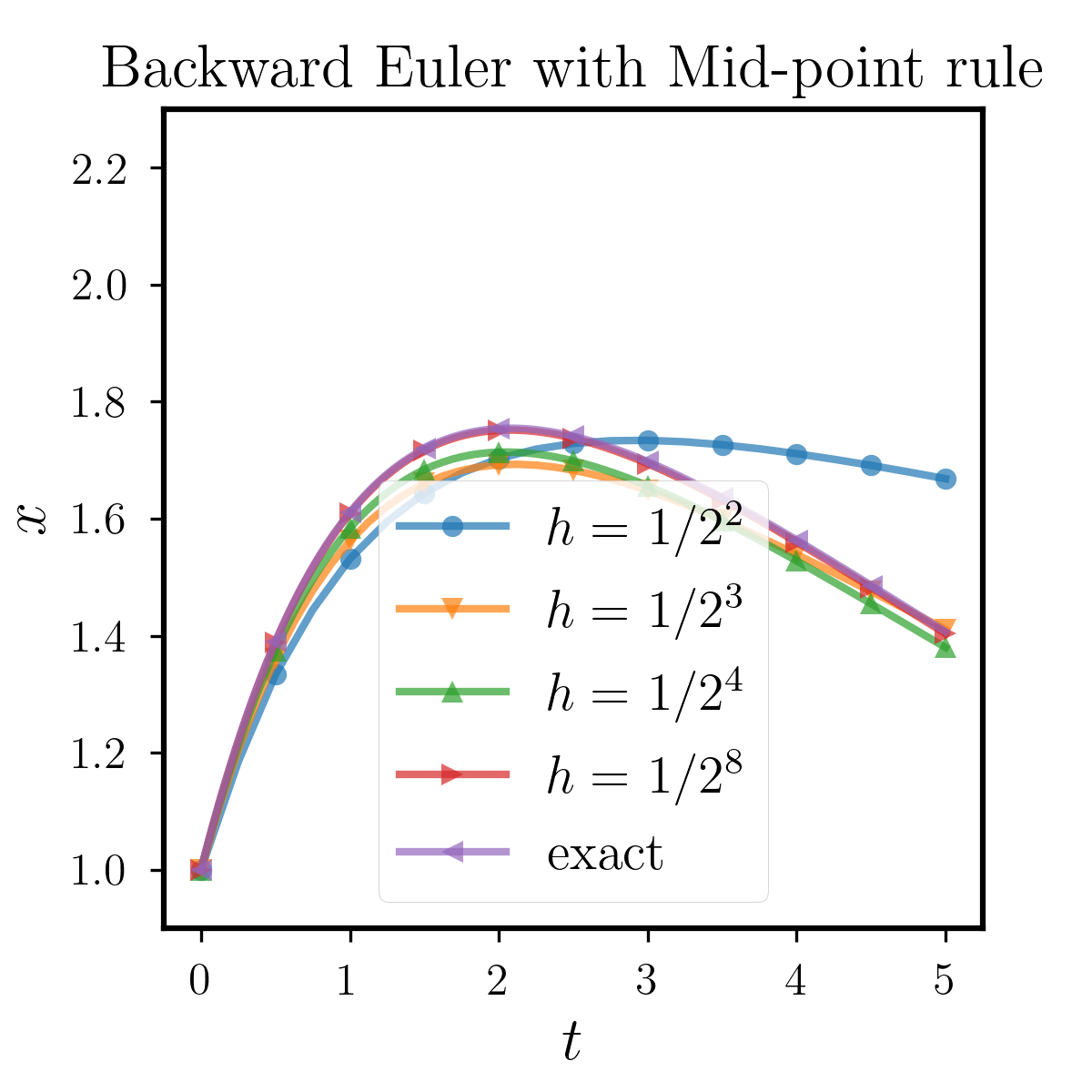}
    \includegraphics[width = 0.3\linewidth]{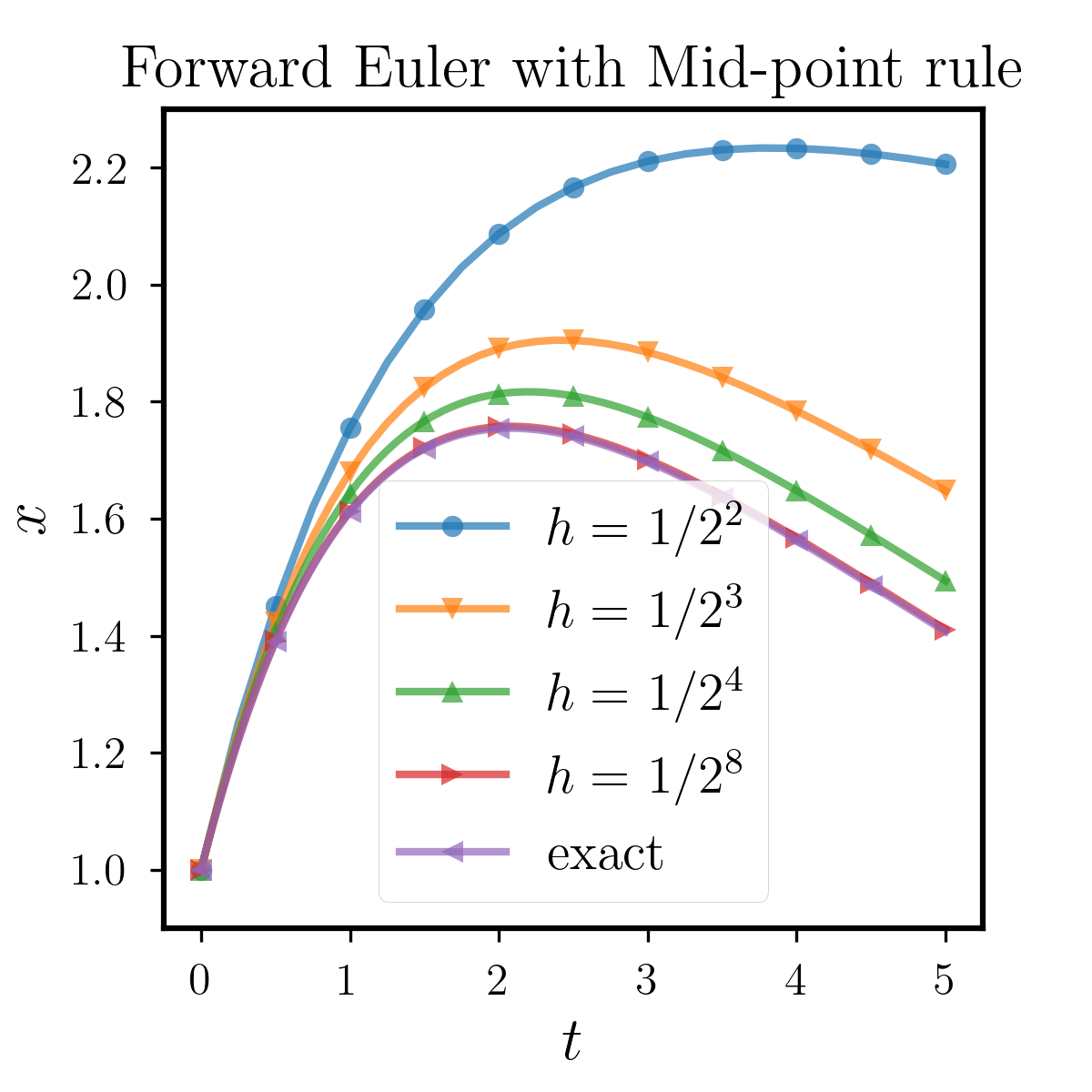}
    \caption{Performance of different methods with the same quadrature. From the left to the right, we apply BDF2, Backward Euler and Forward Euler with Mid-point rule to \eqref{ex...1}. The time steps $h = 1/2^2,1/2^3,1/2^4,1/2^8$,  the $5$ solid curves represent numerical solution of different $h$ and exact solution which is obtained by Laplace Transform.
    }
    \label{fig:ex1-1}
\end{figure}

\begin{figure}[htbp]
    \centering
    \includegraphics[width = 0.3\linewidth]{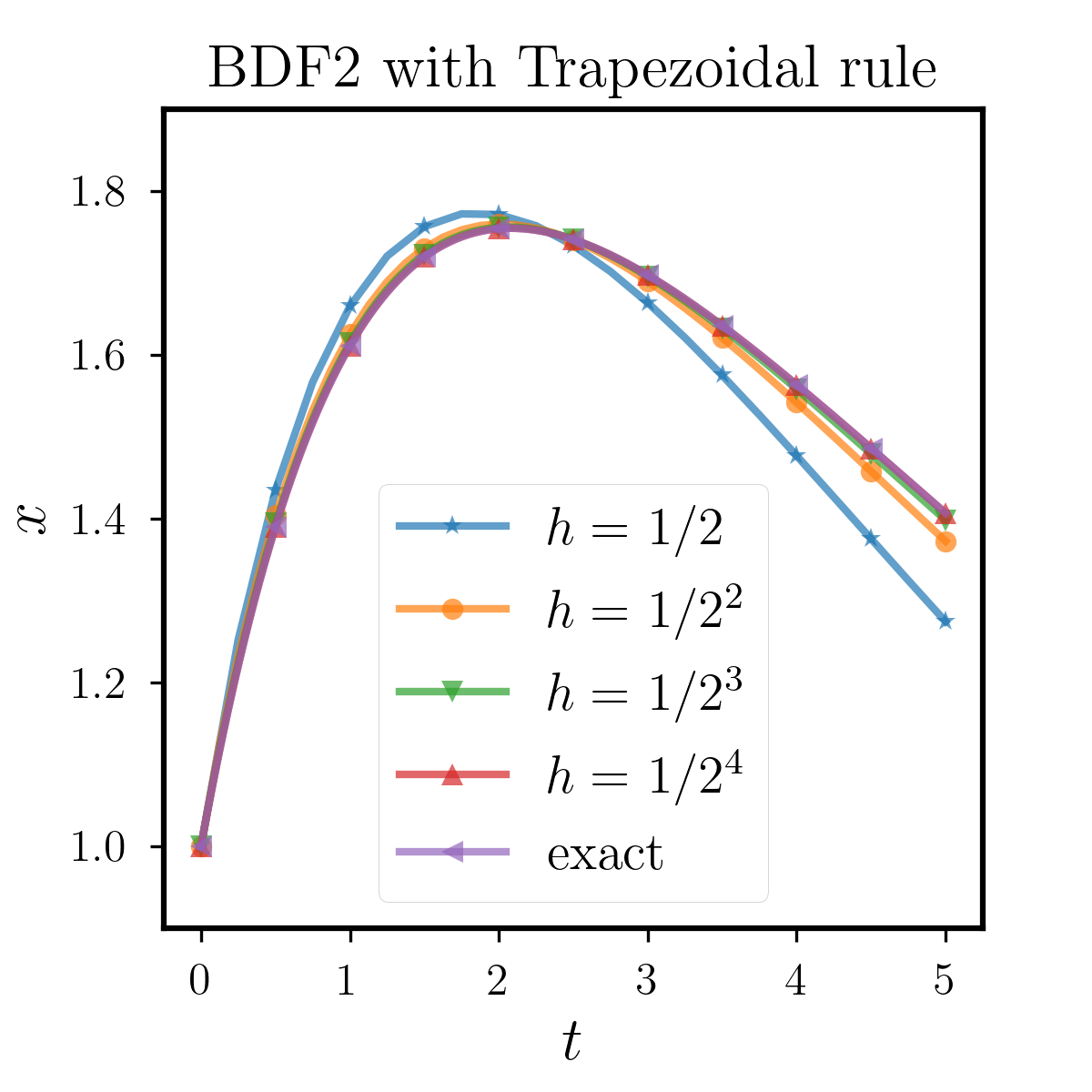}
    \includegraphics[width = 0.3\linewidth]{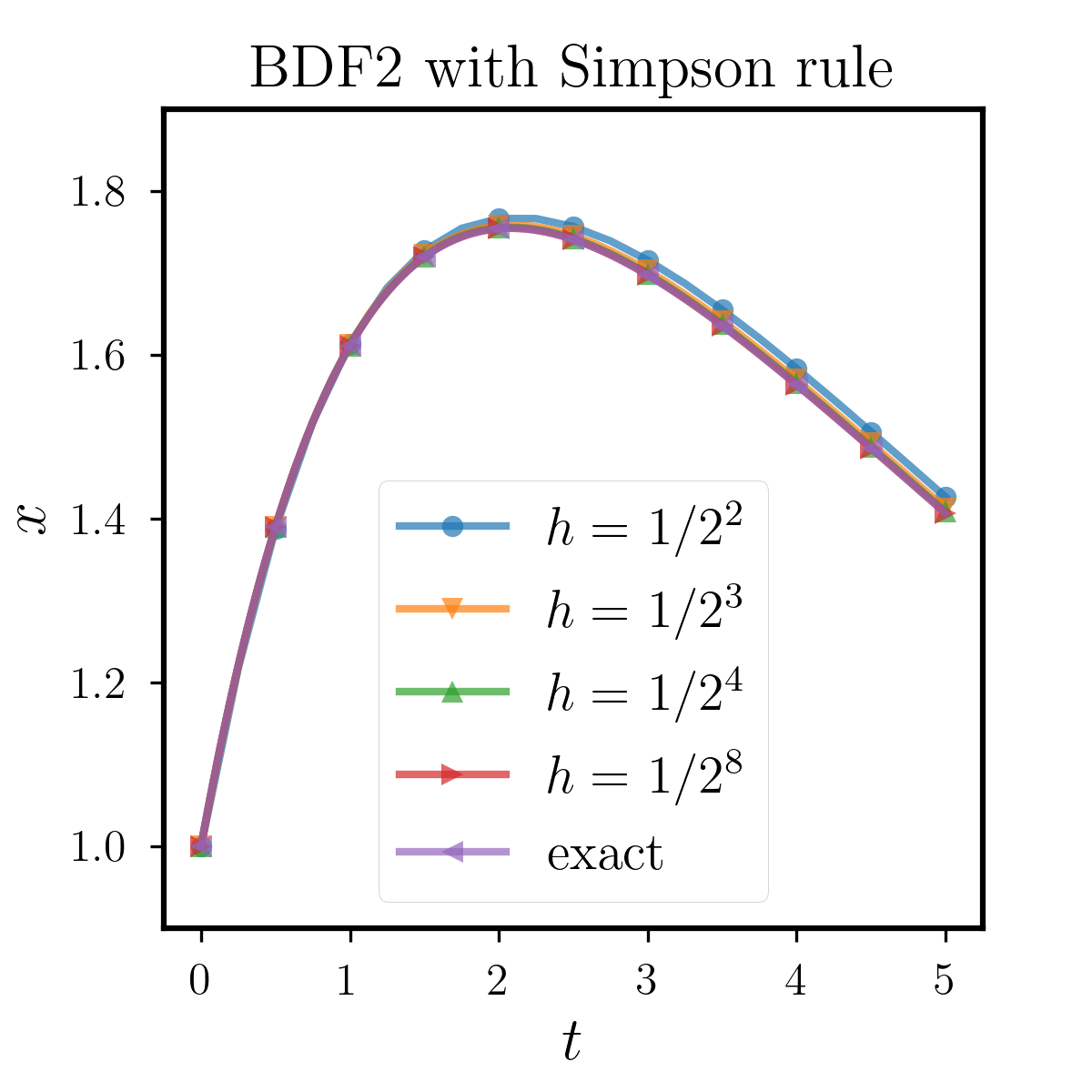}
    \includegraphics[width = 0.3\linewidth]{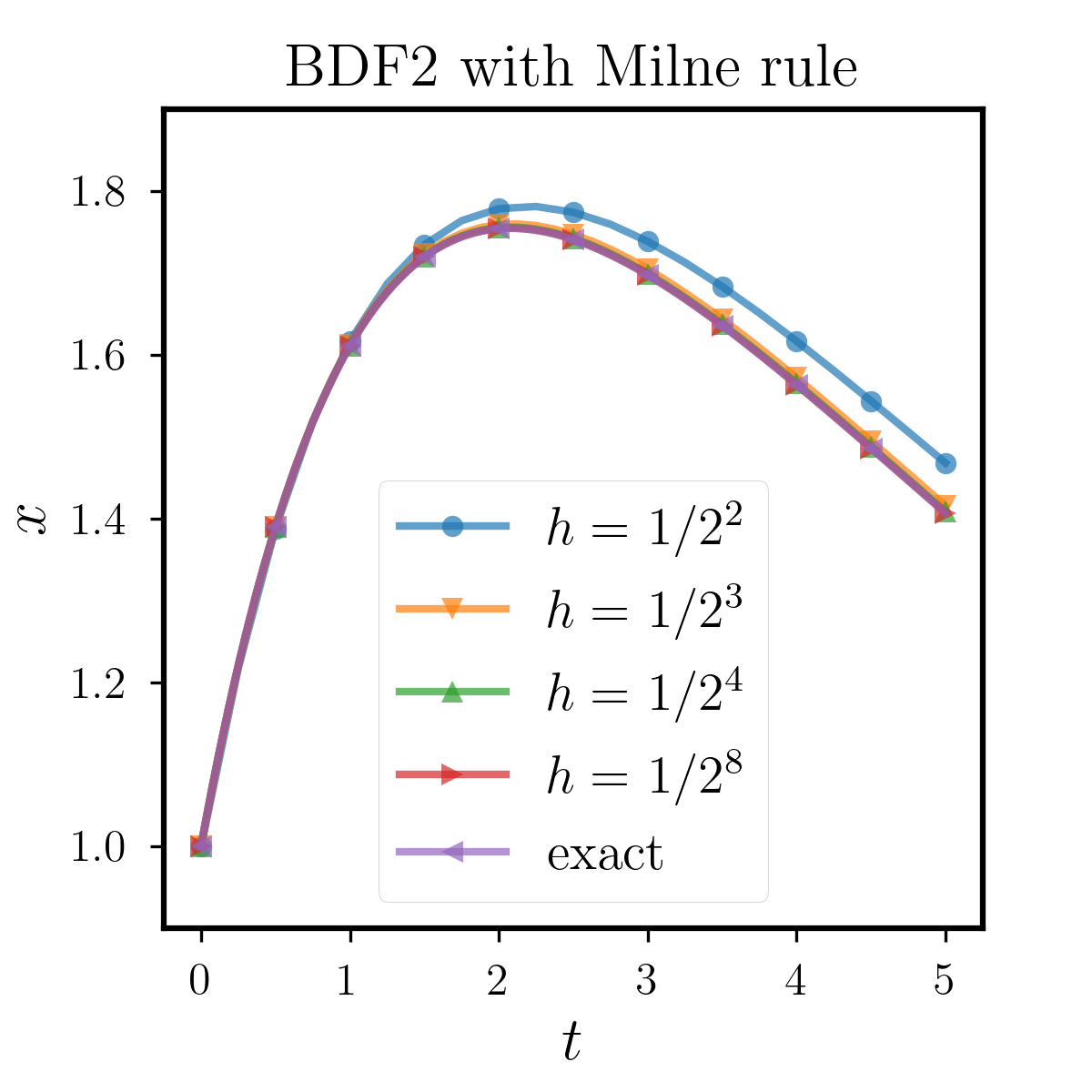}
    \caption{Performance of different quadratures of BDF2. From the left to the right, we apply the BDF2 with Trapezoidal rule, Simpson rule and Milne rule to \eqref{ex...1}. The time steps $h = 1/2^2,1/2^3,1/2^4,1/2^8$, the $5$ solid curves represent numerical solution of different $h$ and exact solution which is obtained by Laplace Transform.}
     \label{fig:ex1-2}
\end{figure}

\subsubsection{Example 2}
In the previous example, we consider an exponential memory term, which has a relatively weak impact on the solution. In this example, we change the kernel to a power function then the equation reads:
\begin{equation}\label{ex...2}
    \dot{x}(t) = x(t) - \int_{0}^{t}\frac{10}{(t-s+1)^2} \diff s, \quad x(0) = 1.
\end{equation}
At this time, we can not derive the analytic solution so that we treat the numerical solution with $h = 1/2^8$ as the exact solution. The result are shown in Figure~\ref{fig:ex2}, where we observe that the solution of the \eqref{ex...2} is periodically oscillatory and decaying. In this example, we still use the Mid-point rule to compare the differences between the three different multi-step methods. It can be seen that for various step sizes, the BDF2 can capture this periodic oscillation well, while for the two Euler methods, as the step size is relatively larger, the numerical solution has a significantly higher or lower amplitude. Therefore, BDF2 has a clear advantage over the two Euler methods in this example. 

\begin{figure}[htbp]
    \centering
    \includegraphics[width = 0.3\linewidth]{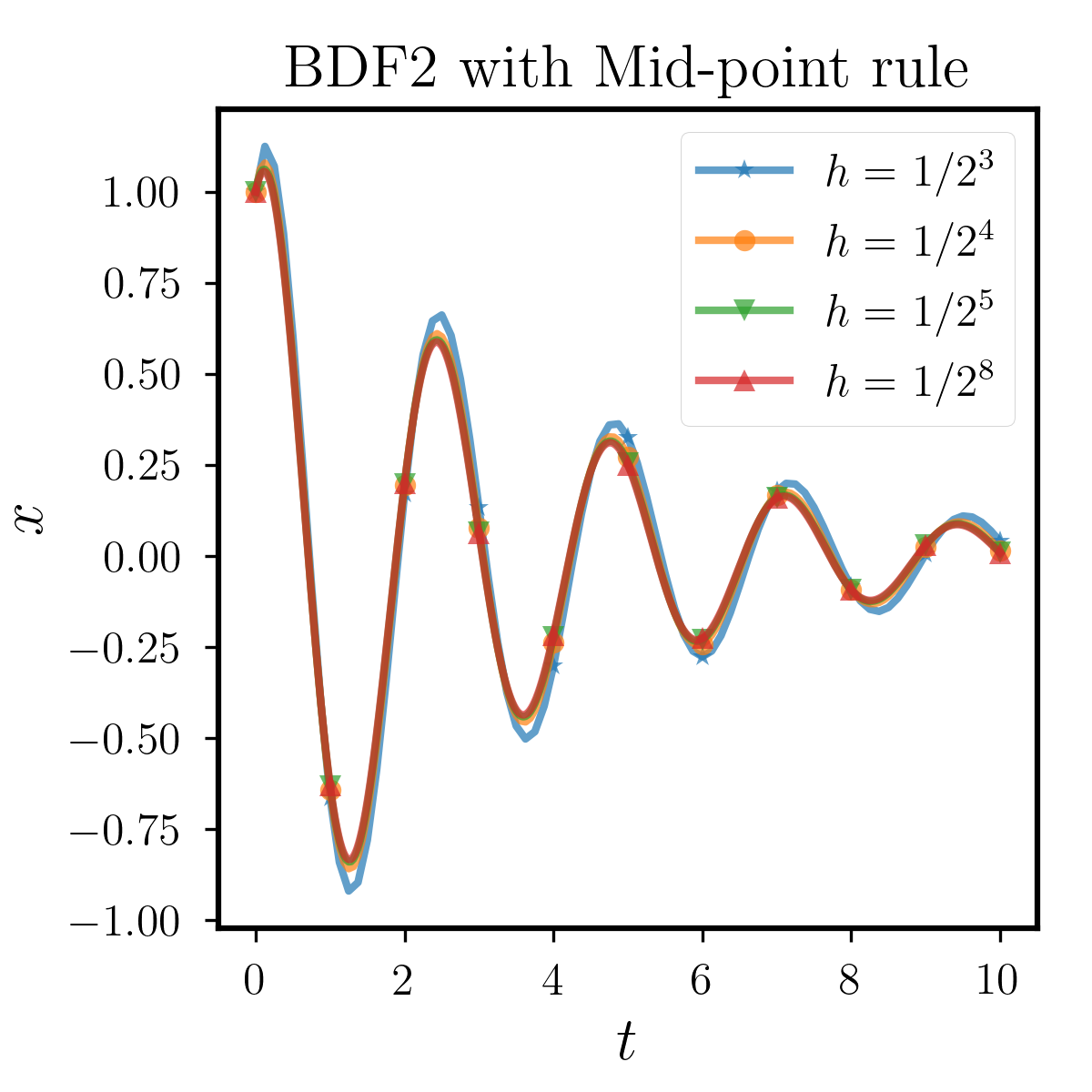}
    \includegraphics[width = 0.3\linewidth]{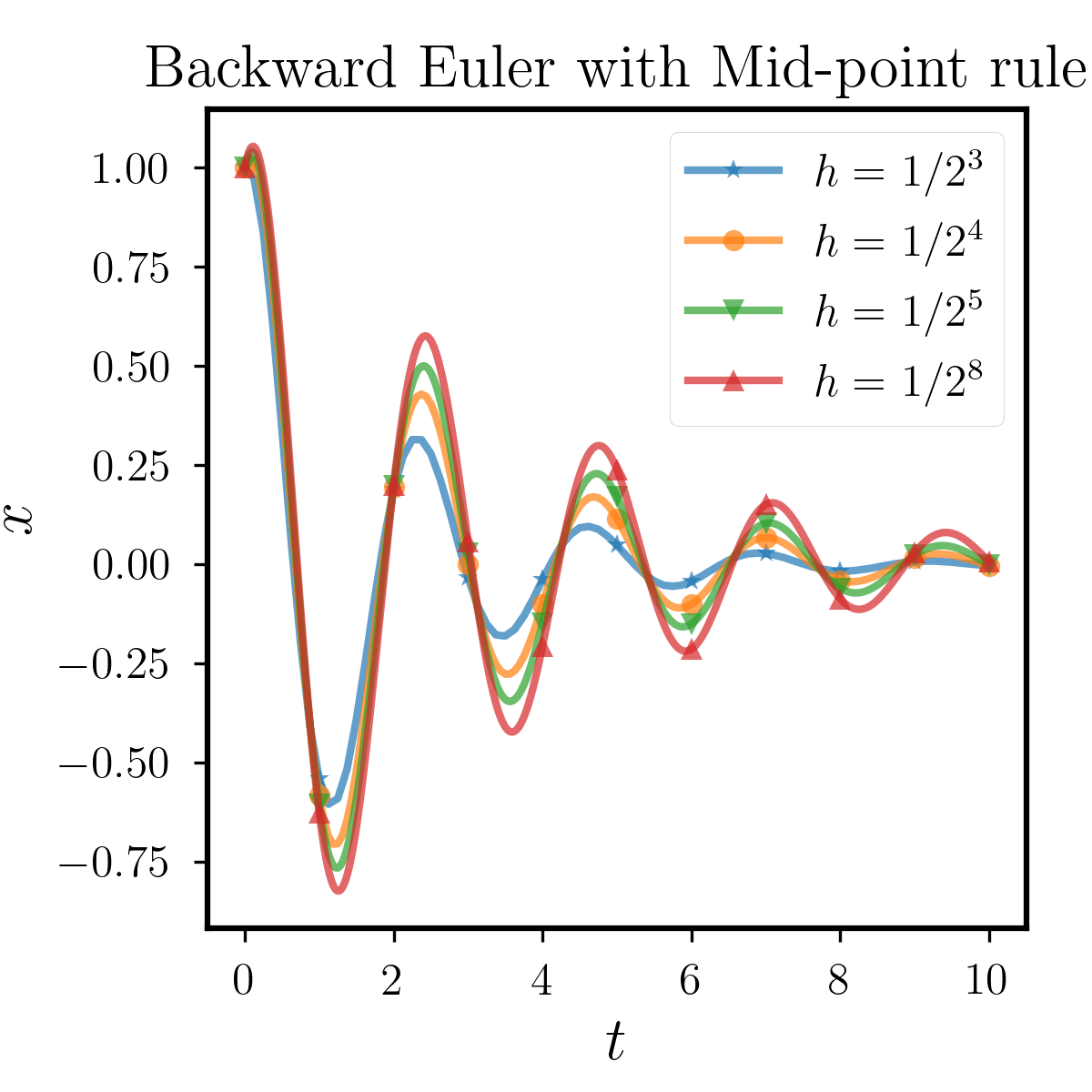}
    \includegraphics[width = 0.3\linewidth]{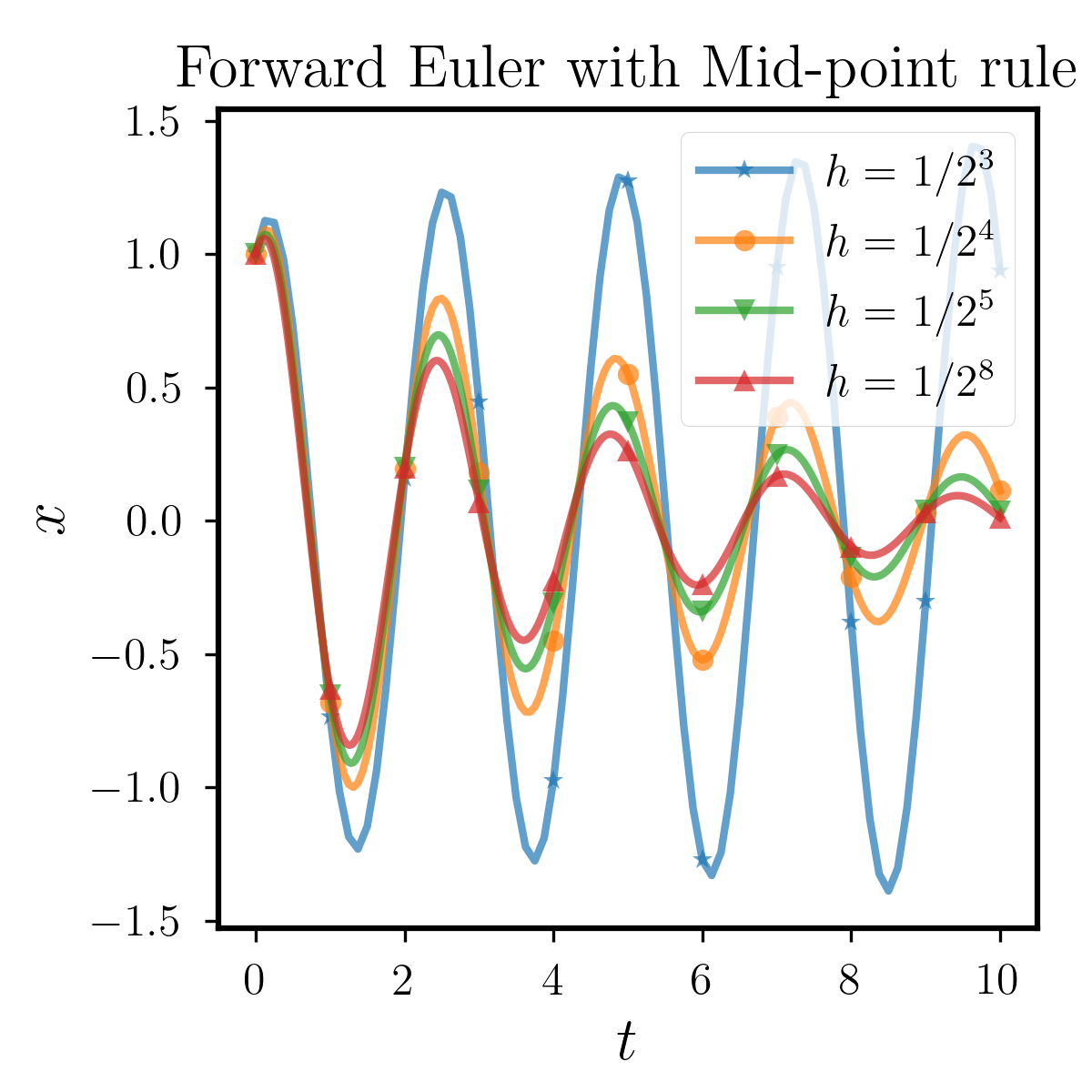}
    \caption{Performance of different methods with the same quadrature. From the left to the right, we apply BDF2, Backward Euler and Forward Euler with Mid-point rule to \eqref{ex...2}. The time steps $h = 1/2^3,1/2^4,1/2^5,1/2^8$, the $4$ solid curves represent numerical solution of different $h$.}
    \label{fig:ex2}
\end{figure}

\subsection{Convergence}
In this subsection, we test the convergence of the proposed methods for ODEs with memory. In the following examples, we set various time steps and measure the error of the numerical solutions. We use the following error measure:
\begin{equation}\label{eq...error}
    \text{error} := \norm{x(t) - x_{\text{exact}}(t)}_\infty = \max_{t_i\in[0,T]} \Abs{x(t_i) - x_{\text{exact}}(t_i)},
\end{equation}
where the exact solution $x_{\text{exact}}(t)$ is obtained analytically and $t_i$ is a grid point. As we have Lemma~\ref{lem...consistent}, the order of LMM is determined by both the classical LMM and the quadrature. Thus we will combine different method with different quadratures and verify the order of the error we define as \eqref{eq...error}.

\subsubsection{Example 3}
In this example we consider the equation
\begin{equation}\label{ex...3}
    \dot{x}(t) = x(t) - \int_0^t 2\text{e}^{-(t-s)} x(s) \diff s, \quad x(0) = 1.
\end{equation}
This equation has an analytical solution:
\begin{equation}
    x(t) = \sin(t) + \cos(t).
\end{equation}
We test MS2 with both Milne rule and Mid-point rule for convergence from $t = 0$ to $t = 10$. Recall that MS2 is a fourth-order method, the order of composed Milne rule and composed Mid-point rule is $4$ and $2$. The results are shown in Figure~\ref{fig:ex3}, where both the $x$-axis and $y$-axis are on a logarithmic scale. It shows that MS2 with Milne rule is fourth-order accurate as the order of both two components are $4$. While MS2 with Mid-point rule is second-order accurate, since it is restricted by the accuracy of quadrature.
\begin{figure}[htbp]
    \centering
    \includegraphics[width = 0.3\linewidth]{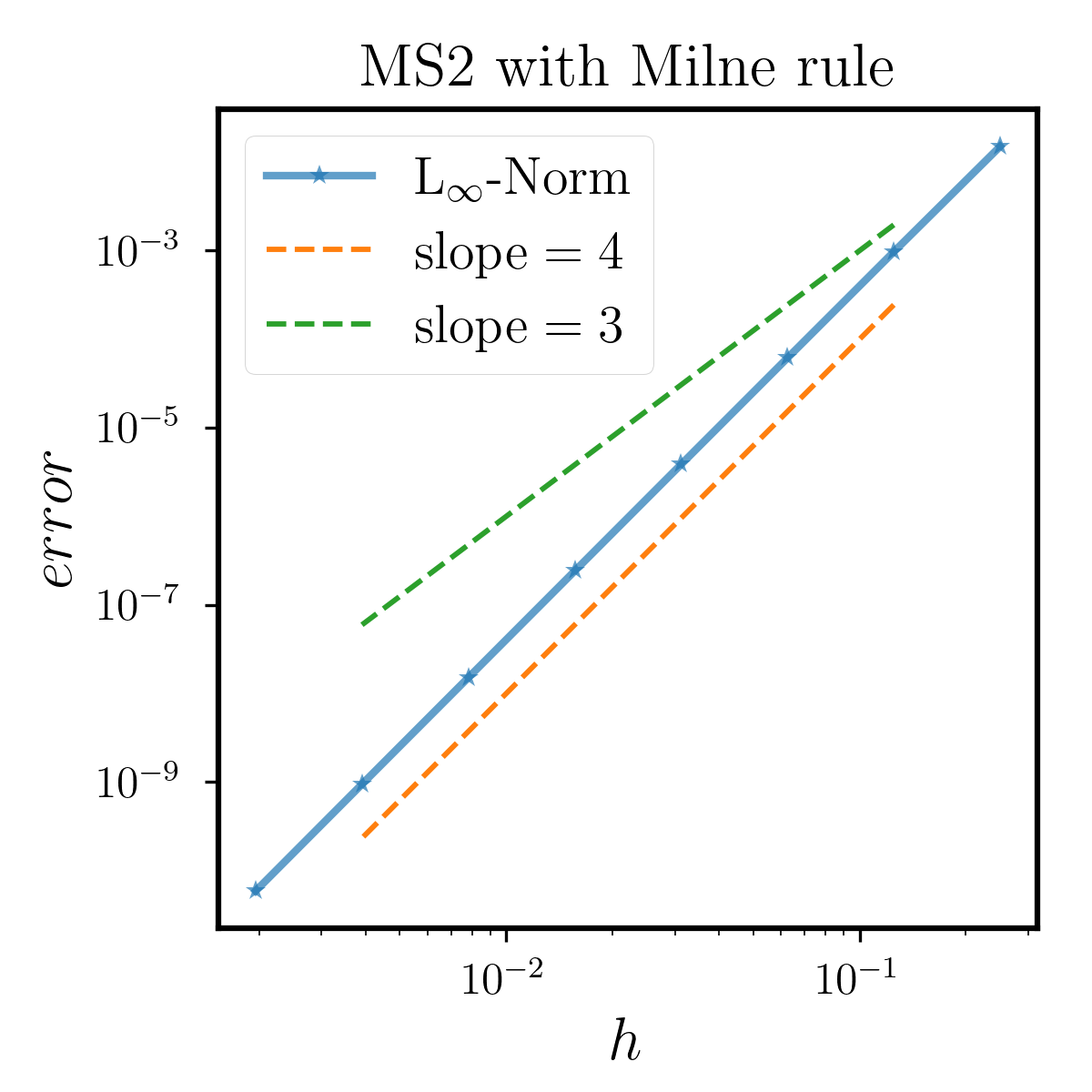}
    \includegraphics[width = 0.3\linewidth]{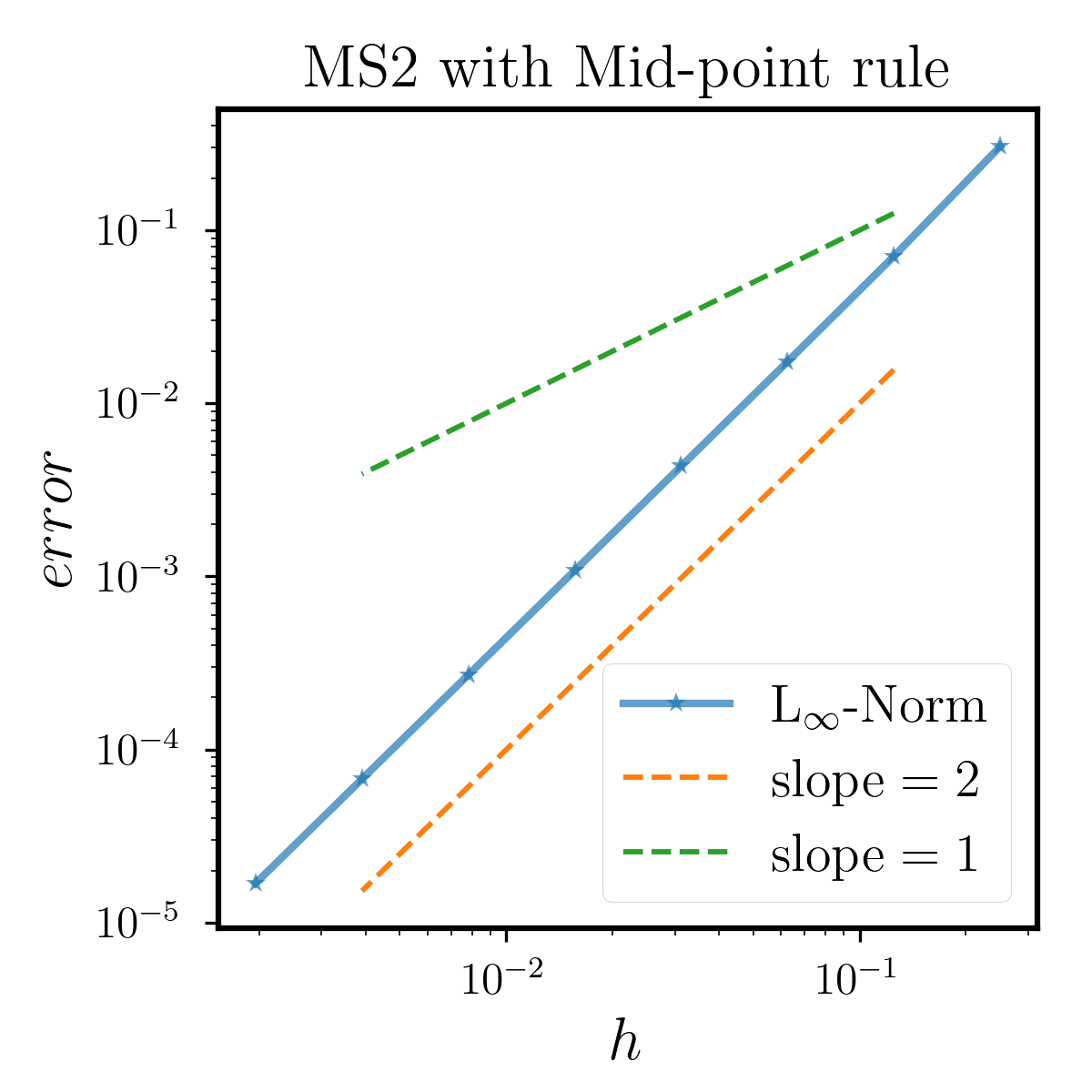}
    \caption{Order verification of MS2 performed on \eqref{ex...3} up to $t = 10$. Left side: MS2 + Milne rule, right side: MS2 + Mid-point rule. The time step sizes $h = 1/2^3, 1/2^4, 1/2^5, 1/2^6, 1/2^7, 1/2^8, 1/2^9, 1/2^{10}$. The blue solid line represents the error \eqref{eq...error} curve with different step sizes, and the dashed lines represent the comparison line of slopes. }
    \label{fig:ex3}
\end{figure}

\subsubsection{Example 4}
In this example, we change the memory kernel and use BDF2 instead to solve the equation which reads
\begin{equation}\label{ex...4}
    \dot{x}(t) = -x(t) + \int_0^t 8\text{e}^{-3(t-s)} x(s) \diff s, \quad x(0) = 1.
\end{equation}
The analytical solution is
\begin{equation}
    x(t) = \left( \frac{1}{3} \sinh (3t)  + \cosh (3t) \right) \text{e}^{-2t}.
\end{equation}
We test the convergence rate of BDF2 with both Milne rule and Mid-point rule from $t = 0$ to $t = 5$. The results are shown in Figure~\ref{fig:ex4}, which shows that both methods are second-order accurate. Unlike the former example, the two methods are mainly restricted by the accuracy of the BDF2, as it is just of order $2$.

\begin{figure}[htbp]
    \centering
    \includegraphics[width = 0.3\linewidth]{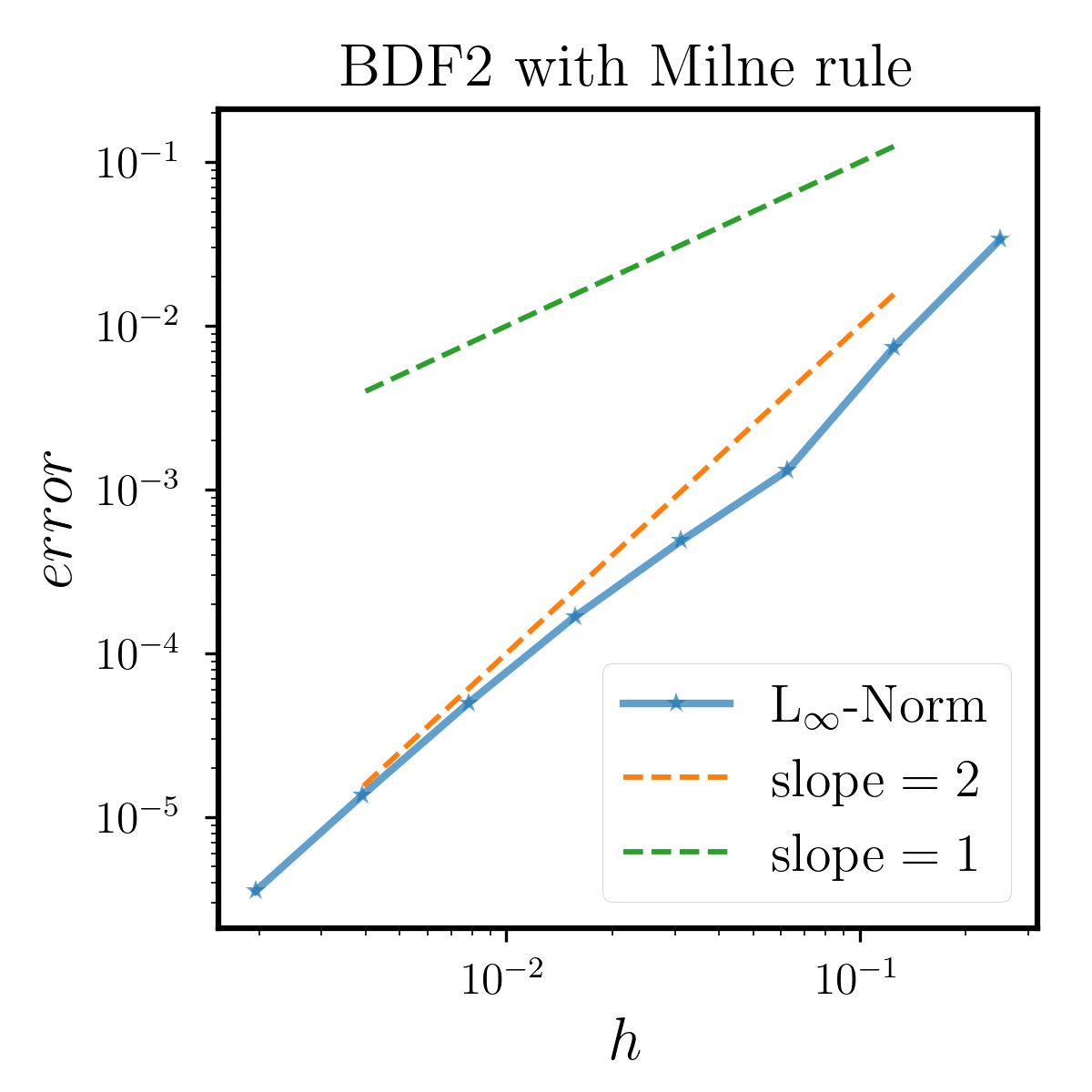}
    \includegraphics[width = 0.3\linewidth]{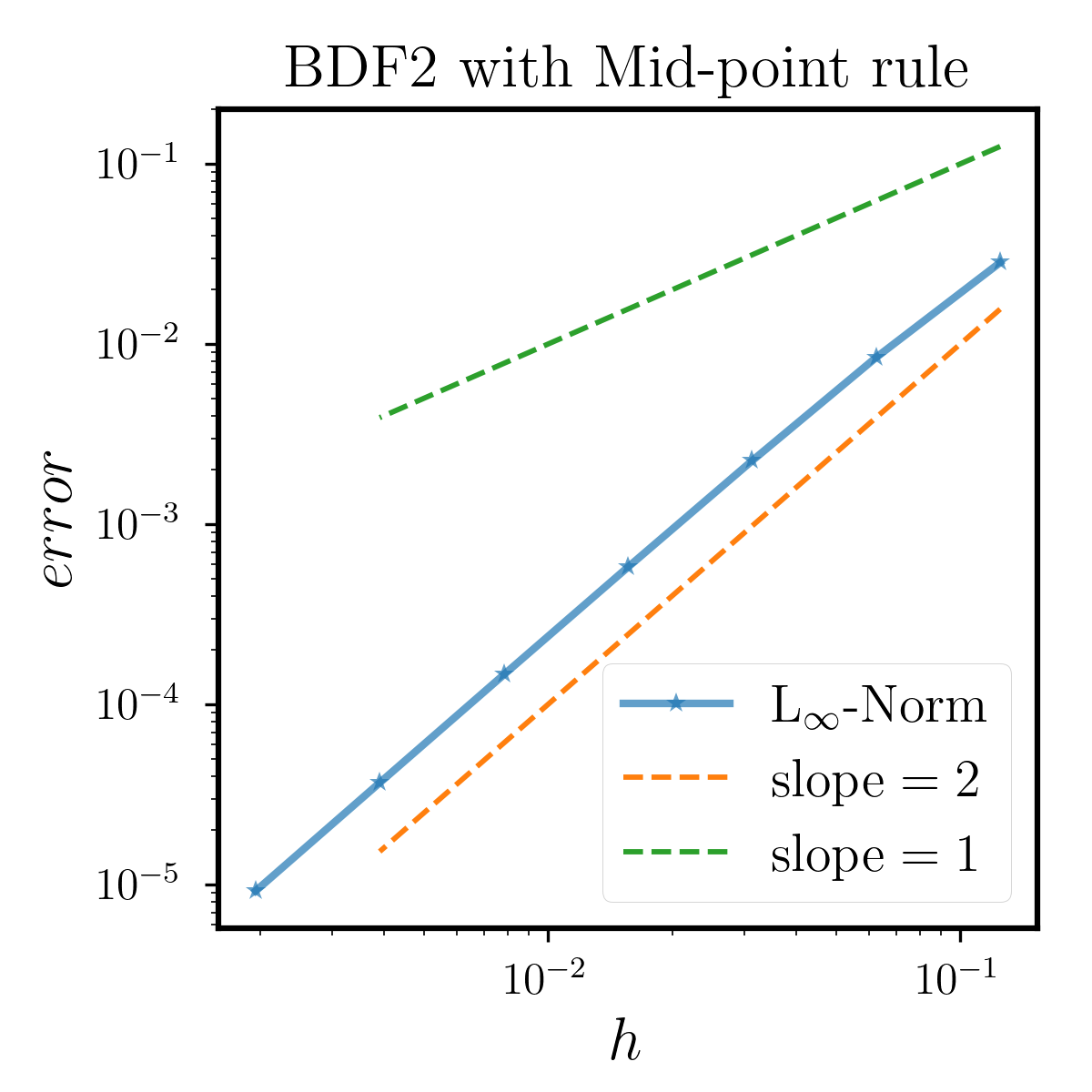}
    \caption{Order verification of BDF2 performed on \eqref{ex...4} up to $t = 5$. Left side: BDF2 with Milne rule, right side: BDF2 with Mid-point rule. The time step sizes $h = 1/2^4, 1/2^5, 1/2^6, 1/2^7, 1/2^8, 1/2^9, 1/2^{10}$. The blue solid line represents the error \eqref{eq...error} curve with different step sizes, and the dashed lines represent the comparison line of slopes. }
    \label{fig:ex4}
\end{figure}

From the above two examples, we can see that the order of the method is determined by both the classical LMM and the quadrature rule, i.e., when we use MS2 the error of integral part is dominant. From this prospective, it is unnecessary to use a method with a large difference of order between the two components. 

\subsection{Weak A-stability}
Finally, we verify the weak A-stability results proved in Section~\ref{section....A-StabilityODEwithMemory}, where  we give the modified definition~\ref{def..WeakAbosoluteStability} and obtain that Backward Euler is weak A-stable. 

\subsubsection{Example 5}
In this example, we can consider the equation 
\begin{equation}\label{ex...5}
    \dot{x} = -11 x + \int_0^t 10\text{e}^{-(t-s)} x(s) \diff s, \quad x(0) = 1 .
\end{equation}
Here $\lambda = -11$, $ \kappa = 10$, thus $\lambda + \kappa <0$ and the solution of \eqref{ex...5} is absolutely stable. Numerically, we use Mid-point as the quadrature, with $h$ values of $1/4$ and $1/8$, and employ the Forward Euler method and the Backward Euler method. The results are shown in the left side of Figure~\ref{fig:ex56}. It is observed that the numerical solutions obtained from the Backward Euler method, which is weak A-stable, are stable for both $h$. While for the Forward Euler method, the numerical solution obtained with a larger $h$, i.e., $h = 1/4$ is divergent, indicating that the method is not weak A-stable.  

\subsubsection{Example 6}
In this example, we discuss our modified definition weak A-stability, where the ``weakness" results from that we treat \eqref{eq...text....SetIncludueExactSoln} as the region of absolute stability for the exact solution which should be a proper subset of the region. However, as we can derive the analytic solution when the kernel is exponential. We find that $\left\{(\lambda,k(\cdot)): \lambda+\int_{0}^{\infty} \Abs{k(\xi)} \diff \xi <0\right\}$ indeed equals to $\mathcal{R}_\text{exact}$ in the case of exponential kernel. In this example, we consider the power function kernel and study the distinction of the two sets in \eqref{eq...text....SetIncludueExactSoln}. We look at the following equation: 
\begin{equation}\label{ex...6}
        \dot{x}(t) = \lambda x + \int_0^t \frac{10}{{(t-s+1)}^2} x(s) \diff s , \quad x(0) = 1,
\end{equation}
with $\lambda + \kappa =-10 + 10 = 0$ or $\lambda + \kappa = -10.1 + 10 = -0.1$.
\begin{figure}[htbp]
    \centering
    \includegraphics[width = 0.3\linewidth]{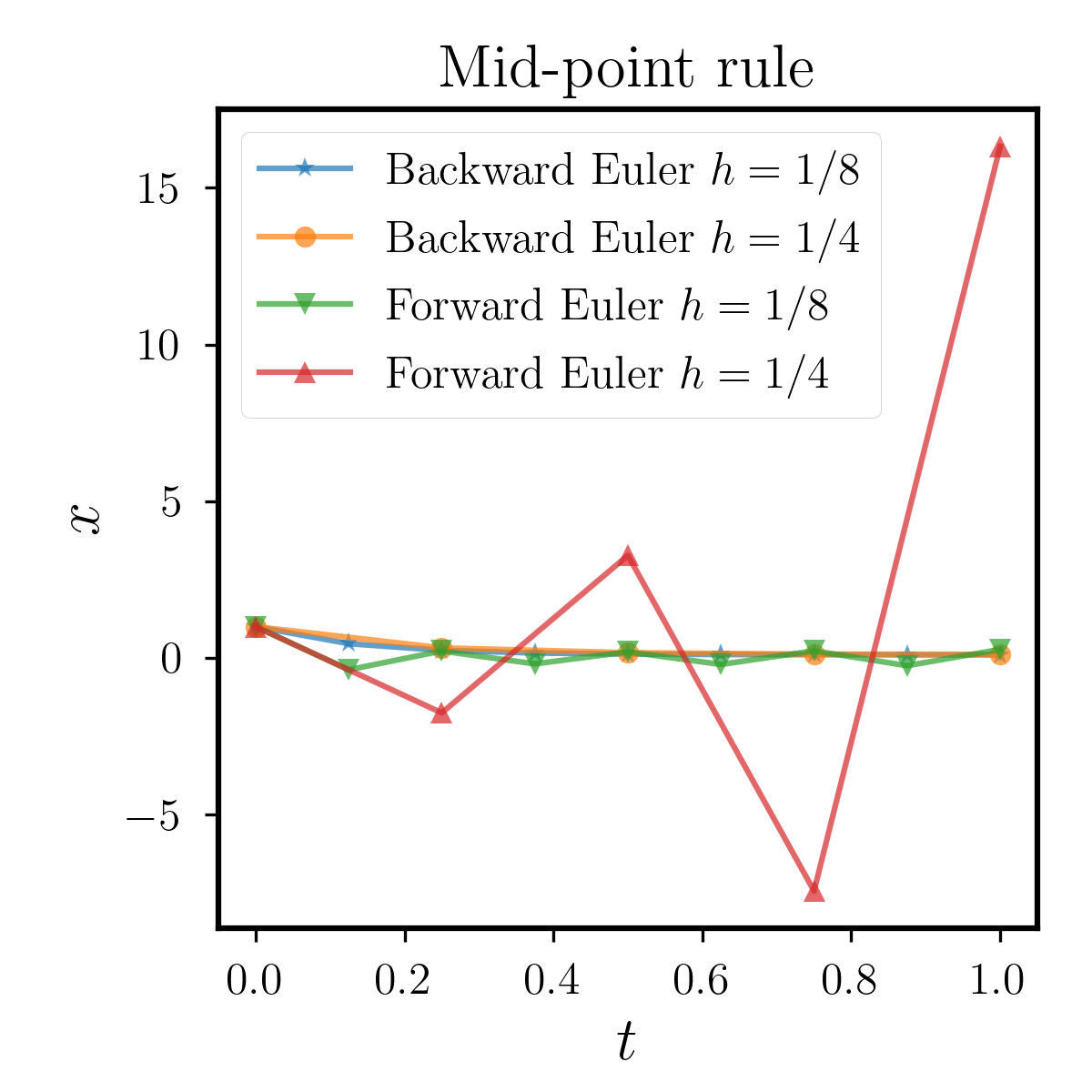}
    \includegraphics[width = 0.3\linewidth]{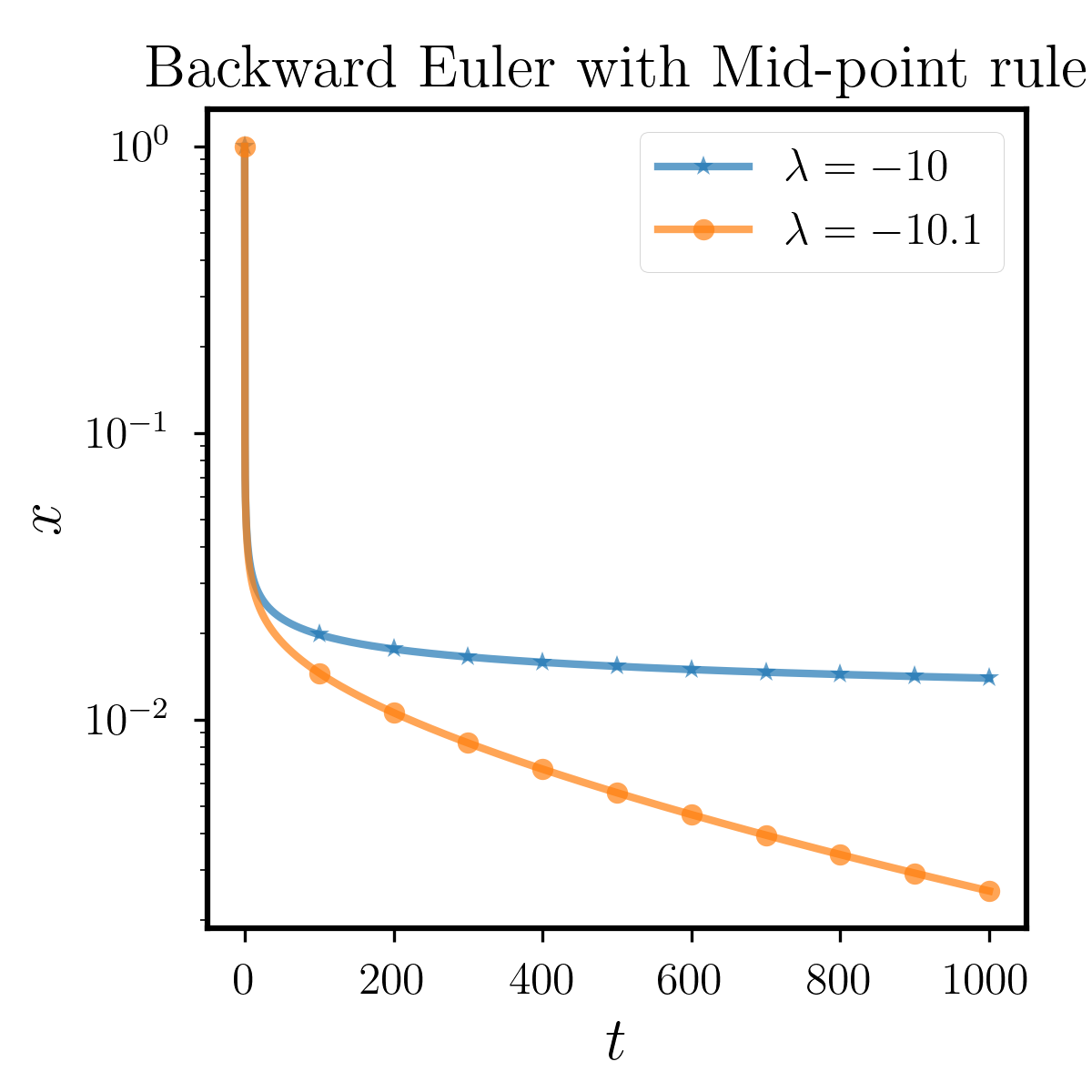}
    \caption{Left: In Example 5, we use Mid-point rule and compare the Backward Euler and Forward Euler with $h = 1/4, 1/8$, we can observe that the solution of Forward Euler is not stable when $h = 1/8$. 
    Right: In Example 6, we set the step size as $1/2^8$ and apply Backward Euler with Mid-point rule to \eqref{ex...6} with different value $\lambda$, we can find that the solution is absolutely stable when $\lambda + \kappa<0$, otherwise converges to a constant when $\lambda + \kappa = 0$. }
    \label{fig:ex56}
\end{figure}
In this experiment, we use Backward Euler with Mid-point rule and set $h = 1/2^8$ so that we treat the solution as exact one. The results are shown in the right side of Figure~\ref{fig:ex56}. It can be observed that we take the logarithmic scale on the $y$-axis to observe the changes in the function value. When $\lambda+\kappa=0$, we found that solution value is convergent at a non-zero value, indicating that this setting does not belong to the absolute stability region $\mathcal{R}_\text{exact}$. However, when $\lambda + \kappa = -0.1$, we can see that the solution continues to monotonically decrease. We calculate its long-term behavior up to $t=1000$ and conclude that it converges to zero, although it does so slowly, indicating that this setting is within $\mathcal{R}_\text{exact}$. These phenomena show that numerically $\left\{(\lambda,k(\cdot)): \lambda+\int_{0}^{\infty} \Abs{k(\xi)} \diff \xi <0\right\}$ equals to $\mathcal{R}_\text{exact}$ when we use the type of kernel in \eqref{ex...6}. From this point of view, our modified definition is properly reasonable for at least some types of kernel function.



\bibliographystyle{siam}
\bibliography{ODEStability.bib}

\end{document}